\newtheorem{thm}{Theorem}[section]
\newtheorem{prop}[thm]{Proposition}
\newtheorem{defn}[thm]{Definition}
\newtheorem{lem}[thm]{Lemma}
\newtheorem{rem}[thm]{Remark}
\DeclareMathOperator{\Spec}{Spec}
\DeclareMathOperator{\Spf}{Spf} 
\DeclareMathOperator{\Sp}{Sp}
\DeclareMathOperator{\Ker}{Ker}
\DeclareMathOperator{\Coker}{Coker}
\begin{document}
\title
{Cohomology of rigid curves with semi-stable coverings}
\author{Naoki Imai and Takahiro Tsushima}
\date{}
\maketitle

\begin{center}
\textit{Dedicated to Professor Robert Coleman\\ 
with respect for his achievements}
\end{center}

\footnotetext{2010 \textit{Mathematics Subject Classification}. 
 Primary: 11G20; Secondary: 14G22.} 

\begin{abstract}
We construct a semi-stable formal model of 
a wide open rigid curve 
with a semi-stable covering, and 
study the $\ell$-adic cohomology of the rigid curve. 
We describe the $\ell$-adic cohomology of the rigid curve 
using the $\ell$-adic cohomology of the irreducible components of 
a semi-stable reduction, and 
homology and cohomology of some graphs. 
We also prove 
the functoriality of the description 
for a finite flat morphism 
that is compatible with semi-stable coverings of 
wide open rigid curves. 
\end{abstract}

\section*{Introduction}
Let $K$ be a complete discretely valued field 
with a non-trivial valuation. 
We assume that the residue field $k$ of $K$ 
is an algebraic extension of a finite field. 
We consider a wide open rigid curve 
$W$ over $K$ with a semi-stable covering. 
The notion of a semi-stable covering of 
a wide open rigid curve 
is due to Coleman and 
Coleman-McMurdy (cf. \cite{CoRec}, \cite{CMp^3}). 
A semi-stable covering of  
a wide open rigid curve is an analogue of 
a semi-stable model of an algebraic curve. 
In fact, Coleman-McMurdy constructed 
a semi-stable model from 
a semi-stable covering of a proper smooth curve 
(cf. \cite[Theorem 2.36]{CMp^3}). 
In this paper, we construct a semi-stable formal model 
of $W$ from a 
semi-stable covering of $W$. 

Let $\ell$ be a prime number different from 
the characteristic of $k$. 
The purpose of this paper is to 
study the $\ell$-adic cohomology of $W$. 
Let $\mathscr{W}$ be a semi-stable formal model of 
$W$ constructed from 
the semi-stable covering of $W$, and let 
$\Gamma_{\mathscr{W}}$ 
be the dual graph of the 
geometric closed fiber of $\mathscr{W}$. 
In this paper, we describe 
the $\ell$-adic cohomology of $W$ 
using 
the $\ell$-adic cohomology of the 
irreducible components of the 
geometric closed fiber of $\mathscr{W}$, 
and homology and cohomology 
of some variants of 
$\Gamma_{\mathscr{W}}$. 

We also study a relative situation. 
Let $W_1$ and $W_2$ be 
wide open rigid curves with semi-stable coverings. 
We consider a finite flat morphism $f \colon W_1 \to W_2$ 
that is compatible with the semi-stable coverings. 
We show that such a morphism extends to a morphism between 
their formal semi-stable models. 
The pushforward and the pullback on 
the $\ell$-adic cohomology by $f$ 
induce morphisms on 
the $\ell$-adic cohomology of the 
irreducible components of the 
geometric closed fibers, 
and homology and cohomology 
of the graphs. 
The induced morphism on 
the $\ell$-adic cohomology of the 
irreducible components of the 
geometric closed fibers is a usual one. 
We will describe the morphisms 
on homology and cohomology 
of the graphs 
using the induced morphisms on 
graphs. 

The connected components of Lubin-Tate spaces for $GL_2$ are 
examples of wide open rigid curves. 
The intention of our research 
is in the application to the 
study of group actions on 
the $\ell$-adic cohomology of 
Lubin-Tate spaces for $\textit{GL}_2$. 

\subsection*{Acknowledgment}
The authors thank Yoichi Mieda for reading a manuscript of this paper 
and giving a lot of suggestions. 
They thank also Yuichiro Hoshi and Seidai Yasuda for 
helpful conversations. 
They are grateful to a referee for suggestions for improvements. 

\subsection*{Notation}
Throughout this paper, we use the following notation. 
For a field $L$ with 
a non-trivial non-archimedean valuation, 
the ring of integers of $L$ is 
denoted by $\mathcal{O}_L$. 
For a field $F$, 
the algebraic closure of $F$ 
is denoted by $\overline{F}$, and 
the absolute Galois group of $F$ is denoted by 
$G_F$. 
For a vector space $V$ over a field $F$, 
the dual vector space of $V$ over $F$ is 
denoted by $V^*$. 
For a commutative ring $A$, 
a commutative $A$-algebra $B$ and 
a scheme $X$ over $A$, 
the base change of $X$ to $B$ is denoted by $X_B$. 
For an extension 
$L_2$ over $L_1$ of fields with 
nontrivial non-archimedean complete valuations 
and a rigid space $X$ over $L_1$, 
the base change of $X$ to $L_2$ is 
denoted by $X_{L_2}$. 

\section{Semi-stable covering}
In this section, 
we recall the notion of 
a semi-stable covering 
and some related results from 
\cite{CMp^3}. 

Let $K$ be a complete discretely valued field 
with a non-trivial valuation $v$. 
We assume that the residue field $k$ of $K$ 
is an algebraic extension of a finite field 
of characteristic $p$. 
We normalize the valuation so that 
$v(K^{\times} )\ = \mathbb{Z}$. 
We put $v(0) =+\infty$ and 
$|x|=p^{-v(x)}$ for $x \in K$. 
The maximal ideal of $\mathcal{O}_K$ 
is denoted 
by $\mathfrak{m}_K$. 
Let $\mathbf{C}$ be the completion of 
an algebraic closure of $K$. 
The absolute value $|\cdot |$ on $K$ 
naturally extends to $|\cdot |$ 
on $\mathbf{C}$. 

For $r \in |\mathbf{C}^{\times} |$, 
let $B_K [r]$ and $B_K (r)$ be the rigid spaces over $K$ 
whose $\mathbf{C}$-valued points are 
$\{ x \in \mathbf{C} \ | \ |x| \leq r \}$ and 
$\{ x \in \mathbf{C} \ | \ |x| < r \}$, 
which we call a closed disk and an open disk respectively. 
For $r,s \in |\mathbf{C}^{\times} |$ satisfying $r \leq s$, 
let $A_K [r,s]$ and $A_K (r,s)$ be the rigid spaces over $K$ 
whose $\mathbf{C}$-valued points are 
$\{ x \in \mathbf{C} \ | \ r \leq |x| \leq s \}$ and 
$\{ x \in \mathbf{C} \ | \ r < |x| < s \}$, 
which we call a closed annulus and an open annulus respectively. 
A closed annulus of the form $A_K [r,r]$ for 
$r \in |\mathbf{C}^{\times} |$ is called 
a circle. 
For such $r,s$, 
we define 
$A_K [r,s)$ and $A_K (r,s]$ similarly, 
which we call semi-open annuli. 

\begin{defn}
A wide open rigid curve over $K$ is 
a one-dimensional smooth geometrically connected rigid space 
$W$ over $K$ which contains affinoid subdomains 
$X$ and $Y$ such that 
\begin{enumerate}
\item 
$W \setminus X$ is a disjoint union of finitely many open annuli, 
\item 
$X$ is relatively compact in $Y$ over $K$ (cf.\ \cite[9.6.2]{BGR}), 
\item 
$Y \cap V$ is a semi-open annulus for any connected component 
$V$ of $W \setminus X$. 
\end{enumerate}
We call $X$ an underlying affinoid of $W$. 
\end{defn}

\begin{thm}\cite[Theorem 2.18]{CMp^3}\label{cptif}
Let $W$ be a wide open rigid curve over $K$ 
with an underlying affinoid $X$. 
Then $W$ may be completed to 
a proper algebraic curve $C$ over $K$ 
by gluing closed disks to the connected components of 
$W \setminus X$. 
\end{thm}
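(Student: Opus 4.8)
The plan is to construct $C$ by an explicit rigid-analytic gluing and then invoke rigid-analytic GAGA to algebraize the result. First I would analyze each connected component $V$ of $W \setminus X$. By hypothesis $V$ is an open annulus, so after choosing an isomorphism $V \cong A_K(r,s)$ I can distinguish its two ends: the end meeting $X$, which by condition (3) is the one along which $Y \cap V$ is the prescribed semi-open annulus, and the opposite ``free'' end, which represents an end of $W$. Writing $t$ for the coordinate on $V$, I would cap the free end by gluing a closed disk $D_V$, using $t$ or its inverse $1/t$ (depending on which end is free) so that $D_V$ fills in the disk missing at that end; concretely $D_V$ is glued to $V$ along a semi-open collar subannulus near the free end, exactly as the closed disk around $\infty$ is glued to $B_K(1)$ to recover $\mathbb{P}^1$. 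Performing this for every $V$ and gluing the disks to $W$ yields a rigid space $C = W \cup \bigcup_V D_V$ containing $W$ as an admissible open whose complement is the union of the capping disks.

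Next I would verify that $C$ is a smooth proper rigid curve. Smoothness is immediate: $W$ is smooth by assumption, each $D_V$ is smooth, and the gluing is along admissible open subspaces, so $C$ is smooth of pure dimension one. Quasi-compactness follows because $C$ is covered by the affinoid $Y$ together with the finitely many closed disks $D_V$. The essential point is properness: although a closed disk has a boundary, the boundary circle of each $D_V$ is glued to an interior circle of the collar annulus in $V$, so after gluing no boundary survives. To make this precise via Kiehl's criterion I would exhibit an admissible affinoid covering of $C$ in which each member is relatively compact in $C$. Each $D_V$ is relatively compact in the slightly larger open disk $V \cup D_V \subset C$, so $D_V \Subset C$; and $Y$ may be enlarged within $C$ by pushing its boundary circles outward into the capped ends $V \cup D_V$, which strictly contain $Y \cap V$, so that $Y \Subset C$ as well. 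Here the relative compactness $X \Subset Y$ supplied by condition (2) guarantees that the collars are genuine annuli with room to spare.

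Finally, having produced a smooth proper rigid curve $C$ over $K$ together with an admissible open immersion $W \hookrightarrow C$ whose complement is the union of the capping disks, I would algebraize $C$. By rigid-analytic GAGA --- more precisely, by the theorem that every smooth proper rigid-analytic curve over a complete non-archimedean field is the analytification of a unique smooth proper algebraic curve --- $C$ is (the analytification of) a proper algebraic curve over $K$. This is the desired completion of $W$.

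I expect the main obstacle to be the verification of properness, namely checking rigorously that capping the free ends with closed disks removes all boundary and that the relative-compactness data of the wide-open structure assembles into a genuine Kiehl covering of $C$. The gluing construction itself is routine once the free ends and their inverting coordinates are correctly identified, and the concluding algebraization is a citable black box once $C$ is known to be smooth and proper.
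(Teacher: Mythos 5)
The paper does not prove this statement: it is quoted verbatim from Coleman--McMurdy \cite[Theorem 2.18]{CMp^3}, so there is no in-paper proof to compare against. Your outline is nonetheless the standard and correct route for that result: identify the free end of each annular component of $W\setminus X$, cap it with a closed disk glued along a semi-open collar so that no boundary survives, check properness by Kiehl's criterion, and algebraize by the theorem that a one-dimensional proper rigid space is the analytification of a projective curve (available in \cite{FvdP}). The two points that would need genuine care in a full write-up are the ones you flag plus one you gloss over: for Kiehl's criterion you need $Y\Subset Y'$ with $Y'$ itself \emph{affinoid}, so you must verify that the enlargement of $Y$ obtained by gluing closed annuli onto its ends along closed annuli is again affinoid (this is where hypotheses like $X\Subset Y$ and the structure of $Y\cap V$ are actually used), and the phrase ``$Y\Subset C$'' should be replaced by relative compactness inside such an affinoid enlargement. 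With that repaired, the argument is sound.
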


Let $X$ be a rigid space over $K$. 
We write $A(X)$ for $\mathcal{O}_X (X)$. 
We put 
\begin{align*}
 |f|_{\mathrm{sup}} &= \sup_{x \in X(\mathbf{C} )} |f(x)| 
 \quad \textrm{for} \ f \in A(X), \\ 
 A^{\circ} (X) &=\Bigl\{ f \in A(X) \ \Bigm| \ 
 |f|_{\mathrm{sup}} \leq 1 \Bigr\}, \\
 A^{\circ \circ} (X) &=\Bigl\{ f \in A(X) \ \Bigm| \ 
 |f(x)| < 1 \textrm{ for all }x \in X(\mathbf{C}) \Bigr\}. 
\end{align*}
Then $A^{\circ} (X)$ is called the ring of 
bounded rigid analytic functions on $X$. 
We consider 
$A^{\circ} (X)$ as a linearly topologized ring 
with the ideal of definition 
$A^{\circ \circ} (X)$. 

Let $X$ be an affinoid rigid space over $K$. 
We write $||f||_X$ instead of 
$|f|_{\mathrm{sup}}$ for $f \in A(X)$. 
We put 
\[
 \overline{X} =\Spec \bigl( 
 A^{\circ} (X)/A^{\circ \circ} (X) \bigr), 
\]
which we call the reduction of $X$. 
The reduction $\overline{X}$ 
is of finite type over $k$ by \cite[6.3.4. Corollary 3]{BGR}. 
The canonical reduction map of $X$ 
is denoted by 
$\mathrm{Red}_X \colon X \to \overline{X}$. 

\begin{defn}
A basic wide open pair of rigid curves is a pair $(W,X)$ 
where $W$ is a wide open rigid curve over $K$ 
and $X$ is an affinoid subdomain of $W$ 
such that 
\begin{enumerate}
\item 
$\overline{X}$ is an irreducible curve 
with at worst ordinary double points as singularities, 
\item 
$||A(X)||_X =|K|$, 
\item 
each connected component of $W \setminus X$ is 
isomorphic to an annulus of the form 
$A_K (r,1)$ for $r \in |K^{\times} |$ satisfying $r<1$. 
\end{enumerate}
We say that $W$ is a basic wide open rigid curve, 
if $(W,X)$ is a basic wide open pair for some $X$. 
\end{defn}

If $(W,X)$ is a basic wide open pair of rigid curves, 
then $\overline{X}^{\mathrm{c}}$ denotes 
the compactification of $\overline{X}$ 
that is smooth at the cusps, 
where the cusps mean the points in 
$\overline{X}^{\mathrm{c}} \setminus \overline{X}$. 

\begin{defn}
Let $C$ be a wide open rigid curve or 
a proper smooth curve over $K$. 
A semi-stable covering of $C$ over $K$ is 
a finite set $\mathcal{S}$ 
of basic wide open pairs $(U,U^\mathrm{u})$ such that
\begin{enumerate}
\item 
$\mathcal{S}^{\mathrm{w}} =\{ U \mid (U,U^\mathrm{u}) \in \mathcal{S} \}$ 
is an admissible covering of $C$,
\item 
if $U_1,U_2 \in \mathcal{S}^{\mathrm{w}}$ and 
$U_1 \neq U_2$, then 
$U_1 \cap U_2$ is a disjoint union of connected components 
of $U_1 \setminus U_1 ^{\mathrm{u}}$, 
\item 
if $U_1$, $U_2$ and $U_3$ are three distinct elements of 
$\mathcal{S}^{\mathrm{w}}$, then 
$U_1 \cap U_2 \cap U_3 = \emptyset$. 
\end{enumerate}
\end{defn}

\begin{thm}\cite[Theorem 2.36]{CMp^3}\label{constructsst}
Let $C$ be a proper smooth curve over $K$. 
If $C$ has a semi-stable covering over $K$, 
then $C$ has an associated 
semi-stable model over $\mathcal{O}_K$ 
whose reduction has at least two irreducible components. 
\end{thm}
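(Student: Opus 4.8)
The plan is to realize the asserted model by gluing standard formal building blocks indexed by the combinatorial data of the covering $\mathcal{S}$, and then to recover $C$ as the (Berthelot) generic fibre. To each pair $(U,U^{\mathrm{u}}) \in \mathcal{S}$ I attach the formal affine scheme $\Spf A^{\circ}(U^{\mathrm{u}})$. Condition (ii) of a basic wide open pair forces $\{|f|_{\mathrm{sup}}\} = |K|$, which is exactly the integrality needed for this to be an admissible flat formal scheme over $\mathcal{O}_K$ with generic fibre $U^{\mathrm{u}}$ and special fibre the reduction $\overline{U^{\mathrm{u}}}$; the latter is reduced (the supremum norm is power-multiplicative) and, by condition (i), is an irreducible curve with at worst ordinary double points. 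The smooth compactification $\overline{U^{\mathrm{u}}}^{\mathrm{c}}$ carries one marked smooth point for each connected component of $U \setminus U^{\mathrm{u}}$, i.e.\ for each annular end of $U$; these cusps are the points along which gluing will take place.

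The second family of blocks models the annuli. By condition (iii) each component of $U \setminus U^{\mathrm{u}}$ is isomorphic to $A_K(r,1)$ with $r \in |K|^{\times}$, $r<1$, so its width $\log_p(1/r)$ is a positive integer $n$; this integrality (coming from $v(K^{\times})=\mathbb{Z}$) is precisely what makes an $\mathcal{O}_K$-model of the annulus available. The standard semi-stable model of the open annulus of width $n$ is $\Spf \mathcal{O}_K[[x,y]]/(xy-\pi^{n})$ for a uniformiser $\pi$, whose special fibre is a chain of $n-1$ projective lines joining the two ends through $n$ formal nodes and whose generic fibre is the open annulus. I note that such a bridge attaches to a core at a \emph{smooth} point of $\overline{U^{\mathrm{u}}}^{\mathrm{c}}$, since the inner boundary circle of the end reduces to the corresponding cusp; hence the attaching produces an ordinary double point and nothing worse.

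Next I would glue. Condition (ii) of the definition of a semi-stable covering says each overlap $U_1 \cap U_2$ is a disjoint union of entire annular ends, and condition (iii) forbids triple overlaps, so the shared annuli are matched \emph{pairwise}; admissibility in condition (i) together with the properness of $C$ guarantees that every annular end is in fact shared, leaving no free ends. I would therefore glue the cores $\Spf A^{\circ}(U_i^{\mathrm{u}})$ along the bridge models indexed by the shared annuli, joining the relevant cusps of $\overline{U_i^{\mathrm{u}}}^{\mathrm{c}}$ directly into a node when $n=1$ and through a chain of $n-1$ lines when $n>1$. Because the overlaps are exactly annuli and the gluing is compatible with the canonical reductions $\mathrm{Red}$, the result $\mathscr{C}$ is a well-defined admissible formal scheme over $\mathcal{O}_K$; it is flat, proper (its special fibre is $\bigcup_i \overline{U_i^{\mathrm{u}}}^{\mathrm{c}}$ glued at cusps, a proper curve), and its special fibre is reduced with only ordinary double points, hence semi-stable. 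The generic fibre of $\mathscr{C}$ is glued from the $U_i^{\mathrm{u}}$ and the annuli along the same overlaps, and is therefore canonically isomorphic to $C$ (cf.\ the compactification mechanism of Theorem~\ref{cptif}, here applied to recover $C$ rather than to cap with disks).

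Finally, for the count: a proper curve cannot equal a single wide open rigid curve (a wide open is an increasing union of affinoids and is not proper), so $\mathcal{S}$ cannot be a singleton and distinct pairs contribute distinct irreducible components $\overline{U_i^{\mathrm{u}}}$; thus the special fibre has at least two components. The hard part will be the gluing step itself: one must check that the formal models of the cores and of the annular bridges agree on overlaps finely enough to glue into a single admissible formal scheme, and that its generic fibre is identified with $C$ and not merely with some admissible cover of it — the subtlety being that the intermediate wide opens $U_i$ are not quasi-compact while $C$ is, so the non-compactness must be seen to be absorbed entirely into the proper chains filling the shared annuli. This reduces to a careful comparison, via $\mathrm{Red}$, of the annular ends of $U_i^{\mathrm{u}}$ with the generic fibres of the bridge models, after which reducedness, nodality, flatness, and properness are formal.
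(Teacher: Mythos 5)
The paper does not prove this theorem --- it is imported verbatim from \cite{CMp^3} --- so the only thing to compare you against is the analogous construction the paper does carry out for wide opens (Proposition \ref{compati}, Theorem \ref{const}, Proposition \ref{patch}). Your global picture agrees with that construction: cores $\Spf A^{\circ}(U_i^{\mathrm{u}})$, formal annuli $xy=\pi^n$ bridging them, special fibre $\bigcup_i \overline{U_i^{\mathrm{u}}}^{\mathrm{c}}$ meeting in nodes, and at least two components because the cores are pairwise disjoint while a proper curve cannot be a single basic wide open. But the step you yourself defer as ``the hard part'' is exactly where the argument breaks, and the device that repairs it is the actual content of the theorem.

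The concrete failure is that your building blocks do not overlap, so there is nothing to glue along. By condition (2) of a semi-stable covering, $U_{i_1}^{\mathrm{u}} \cap U_{i_2} = \emptyset$ for $i_1 \neq i_2$, and each connecting annulus is a component of $U_i \setminus U_i^{\mathrm{u}}$, hence disjoint from every core. Thus $\{U_i^{\mathrm{u}}\}_i \cup \{V_j\}_j$ is a \emph{pairwise disjoint} family of admissible opens covering the connected space $U_i$ set-theoretically; it cannot be an admissible covering (it would disconnect $U_i$), and a formal scheme patched from $\Spf A^{\circ}(U_i^{\mathrm{u}})$ and annulus models alone has as generic fibre the disjoint union of the pieces, not $C$. ``Gluing at the cusps of $\overline{U_i^{\mathrm{u}}}^{\mathrm{c}}$'' is gluing along closed points that do not even lie in the reductions $\overline{U_i^{\mathrm{u}}}$, which is not an operation on formal schemes. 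The missing idea --- used in \cite{CMp^3} and in Proposition \ref{compati} here --- is to manufacture genuine overlaps: remove from each core the residue disks of a Galois-stable set $S_i$ of smooth points of $\overline{U_i^{\mathrm{u}}}$ to get an affinoid $U(S_i)\subset U_i^{\mathrm{u}}$, and enlarge each annulus to the affinoid $X_j(S_{i_1},S_{i_2})=U(S_{i_1})\cup U(S_{i_2})\cup V_j$, which is affinoid by Proposition \ref{affi}. Then $\{U_i^{\mathrm{u}}\}\cup\{X_j(S_{i_1},S_{i_2})\}$ is a pure affinoid covering in the sense of \cite[Definition 4.8.3]{FvdP}: the overlaps are the $U(S_i)$, whose reductions are affine opens on both sides, and the formal-model machinery of \cite[\S 4.8]{FvdP} produces the model; semi-stability at the new point $y_j$ whose residue tube is the annulus $V_j$ is exactly Lemma \ref{sst}. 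A secondary slip: the special fibre of $\Spf \mathcal{O}_K[[x,y]]/(xy-\pi^{n})$ is a single node (two branches), not a chain of $n-1$ projective lines --- the chain is the special fibre of the regular resolution; either serves as a semi-stable model in the sense of a reduced nodal special fibre, but your description conflates the two.
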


\begin{rem}
In \cite[Theorem 2.36]{CMp^3}, it is assumed that 
$K$ satisfies Hypothesis T, 
which means that $\mathbf{C}$ is isomorphic to the completion of 
an algebraic closure of a non-archimedean local field. 
The field $K$ in this paper satisfies Hypothesis T. 
\end{rem}

\section{Morphism between graphs}
In this paper, 
a graph means a finite directed graph 
such that 
each edge has two directions. 

Let $\Gamma$ be a graph. 
The set of the vertices of $\Gamma$ is 
denoted by $\mathcal{V}(\Gamma)$, 
and the set of the directed edges of $\Gamma$ is 
denoted by $\mathcal{E}(\Gamma)$. 
For $e \in \mathcal{E}(\Gamma)$, 
the source of $e$ and the target of $e$ 
are denoted by 
$s(e)$ and $t(e)$ respectively. 
For $e \in \mathcal{E}(\Gamma)$, 
let $\overline{e}$ be 
the directed edge obtained by reversing the direction 
of $e$. 

We take a prime number $\ell$ that is different from $p$. 
Let 
$V(\Gamma ,\mathbb{Q}_{\ell} )$ 
be the $\mathbb{Q}_{\ell}$-vector space 
generated by $\mathcal{V}(\Gamma)$, 
and let $E(\Gamma ,\mathbb{Q}_{\ell} )$ 
be the $\mathbb{Q}_{\ell}$-vector space 
generated by $\mathcal{E}(\Gamma)$ 
with relation $e =-\overline{e}$ for all 
$e \in \mathcal{E}(\Gamma)$. 
We consider two $\mathbb{Q}_{\ell}$-linear maps 
\begin{align*}
d& \colon E(\Gamma ,\mathbb{Q}_{\ell} )
 \longrightarrow V(\Gamma ,\mathbb{Q}_{\ell} );\ 
 e \mapsto t(e) -s(e), \\
\delta& \colon V(\Gamma ,\mathbb{Q}_{\ell} )
 \longrightarrow E(\Gamma ,\mathbb{Q}_{\ell} );\ 
 v \mapsto \sum_{e \in \mathcal{E}(\Gamma),\ t(e)=v} e. 
\end{align*}
We put 
$H_1 (\Gamma, \mathbb{Q}_{\ell}) =\Ker(d)$ 
and 
$H^1 (\Gamma, \mathbb{Q}_{\ell}) =\Coker(\delta)$. 
A cycle $R$ in $\Gamma$ can be considered 
as an element of $H_1 (\Gamma, \mathbb{Q}_{\ell})$. 
We consider a natural bilinear pairing
\[
 \langle\ ,\ \rangle \colon E(\Gamma ,\mathbb{Q}_{\ell} ) \times 
 E(\Gamma ,\mathbb{Q}_{\ell} ) \longrightarrow 
 \mathbb{Q}_{\ell} 
\]
determined by 
\begin{equation*}
 \langle e_1 ,e_2 \rangle=
 \begin{cases}
 1 & \textrm{if } e_1 =e_2 \\
 0 & \textrm{if } e_1 \neq e_2 ,\overline{e}_2 
 \end{cases}
\end{equation*}
for $e_1 ,e_2 \in \mathcal{E}(\Gamma)$. 
Then this pairing induces a bilinear perfect pairing 
\[
 H_1 (\Gamma, \mathbb{Q}_{\ell}) \times 
 H^1 (\Gamma, \mathbb{Q}_{\ell}) \longrightarrow 
 \mathbb{Q}_{\ell}. 
\]
Therefore we have a canonical isomorphism 
$H^1 (\Gamma, \mathbb{Q}_{\ell}) \cong 
 H_1 (\Gamma, \mathbb{Q}_{\ell})^*$. 

Let $\Gamma_1$ and $\Gamma_2$ be graphs. 

\begin{defn}
A finite flat morphism $\phi \colon \Gamma_1 \to \Gamma_2$ 
of degree $n$ 
consists of the following data: 
\begin{itemize}
\item 
Surjective maps 
$\phi_V \colon \mathcal{V}(\Gamma_1) \to \mathcal{V}(\Gamma_2)$ 
and 
$\phi_E \colon \mathcal{E}(\Gamma_1) \to \mathcal{E}(\Gamma_2)$ 
such that 
$\phi_V \circ s =s \circ \phi_E$, 
$\phi_E (\overline{e}) =\overline{\phi_E (e)}$ 
for $e \in \mathcal{E}(\Gamma_1)$, 
and the map 
$s \colon \phi_E ^{-1} (e') \to \phi_V ^{-1} \bigl( s(e') \bigr)$ is 
surjective for $e' \in \mathcal{E}(\Gamma_2 )$. 
\item
Positive integers $n_v$ and $n_e$ for 
all $v \in \mathcal{V}(\Gamma_1 )$ 
and $e \in \mathcal{E}(\Gamma_1 )$ such that 
$n_e =n_{\overline{e}}$ for $e \in \mathcal{E}(\Gamma_1 )$, 
\[
 \sum_{e \in \phi_E ^{-1} (e')} n_e =n \quad \ \textrm{and} \quad 
 \sum_{e \in \phi_E ^{-1} (e'),\, s(e)=v} n_e =n_v
\] 
for $e' \in \mathcal{E}(\Gamma_2 )$ and 
$v \in \phi_V ^{-1} \bigl( s(e') \bigr)$. 
\end{itemize}
\end{defn}

Let $\phi \colon \Gamma_1 \to \Gamma_2$ be 
a finite flat morphism of degree $n$. 
For a cycle $R=e_1 \cdots e_m$ in $\Gamma_1$, 
the cycle $\phi_E (e_1 )\cdots \phi_E (e_m )$ in $\Gamma_2$ 
is denoted by $\phi (R)$. 

\begin{prop}\label{pullcycle}
Let $R'$ be 
a cycle in $\Gamma_2$. 
Then there are cycles 
$R_1 ,\ldots , R_m$ in $\Gamma_1$ such that 
$\phi (R_i )=R'$ for $1 \leq i \leq m$ 
and 
\[
 \bigl| \{ 1 \leq i \leq m \mid e \in \mathcal{E}(R_i ) \} \bigr| 
 =n_e 
\]
for all $e \in \mathcal{E}(\Gamma_1 )$ 
satisfying $\phi_E (e) \in \mathcal{E}(R' )$. 
Furthermore, 
$\sum_{i=1} ^m R_i \in 
 H_1 (\Gamma_1, \mathbb{Q}_{\ell})$ does 
not depend on a choice of $R_1 ,\ldots , R_m$. 
\end{prop}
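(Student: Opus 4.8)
The plan is to realize the lifts $R_1,\dots,R_m$ as the cyclic components of a ``fibre graph'' of $R'$ along $\phi$, and to read off both the multiplicity condition and the independence statement from the combinatorics of that graph. Write $R'=e'_1\cdots e'_k$ with $v_j=t(e'_j)=s(e'_{j+1})$ for $j\in\mathbb{Z}/k\mathbb{Z}$. First I would build a layered directed multigraph $\widetilde{\Gamma}$ whose vertices are the disjoint union of the fibres $L_j=\phi_V^{-1}(v_j)$ over the $k$ positions of $R'$, and between $L_{j-1}$ and $L_j$ I insert, for each $e\in\phi_E^{-1}(e'_j)$, exactly $n_e$ parallel directed edges running from the copy of $s(e)$ in $L_{j-1}$ to the copy of $t(e)$ in $L_j$; this is legitimate since $\phi_V\circ s=s\circ\phi_E$ and $\phi_V\circ t=t\circ\phi_E$ force $s(e)\in L_{j-1}$ and $t(e)\in L_j$. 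Every edge of $\widetilde{\Gamma}$ projects to an edge of $\Gamma_1$ lying over some $e'_j$, and a closed path in $\widetilde{\Gamma}$ running cyclically through the layers projects to a closed walk in $\Gamma_1$ mapping onto $R'$; these closed walks will be the $R_i$.

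The decisive point is that $\widetilde{\Gamma}$ is \emph{balanced}: at every vertex $w\in L_j$ the in-degree and the out-degree both equal $n_w$. The out-degree is $\sum_{e\in\phi_E^{-1}(e'_{j+1}),\,s(e)=w}n_e$, which equals $n_w$ by the defining identity applied to $e'=e'_{j+1}$ (note $w\in\phi_V^{-1}(s(e'_{j+1}))$). For the in-degree $\sum_{e\in\phi_E^{-1}(e'_j),\,t(e)=w}n_e$ I would pass to reversed edges, using $n_e=n_{\overline e}$, $s(\overline e)=t(e)$ and $\phi_E(\overline e)=\overline{e'_j}$, so that the same identity applied to $\overline{e'_j}$ again yields $n_w$. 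Granting balance, the standard decomposition of a finite balanced directed multigraph into edge-disjoint elementary directed cycles produces cycles $R_1,\dots,R_m$, each running cyclically through the layers and hence mapping onto $R'$ under $\phi$. Because an elementary cycle meets each layered vertex at most once, it uses at most one of the $n_e$ parallel copies of a given edge $e$; since the decomposition is edge-disjoint, the $n_e$ copies of $e$ are distributed among exactly $n_e$ distinct cycles, which is precisely the condition $\bigl|\{\,i\mid e\in\mathcal{E}(R_i)\,\}\bigr|=n_e$.

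For the final assertion I would compute $\sum_{i=1}^m R_i$ as a $1$-chain in $E(\Gamma_1,\mathbb{Q}_\ell)$ directly. Every edge of $\widetilde{\Gamma}$ is oriented forward along $R'$, so each copy of an edge $e$ contributes $+e$, and the copies are partitioned among the $R_i$; hence $\sum_{i=1}^m R_i=\sum_{e:\ \phi_E(e)\in\mathcal{E}(R')}n_e\,e$. The right-hand side depends only on $R'$, on $\phi_E$ and on the integers $n_e$, and not on the chosen decomposition. Moreover each $R_i$ is a closed walk, so $d\bigl(\sum_i R_i\bigr)=0$ and the sum lies in $H_1(\Gamma_1,\mathbb{Q}_\ell)=\Ker(d)$. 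This yields at once both the existence of the $R_i$ and the independence of their sum from all choices.

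The step I expect to be the genuine obstacle is the balance condition, and specifically its in-degree half: this is not among the axioms as stated, which are phrased only through the source map $s$, and it must be manufactured from the symmetry $n_e=n_{\overline e}$ by transporting the source condition for $e'_j$ to one for $\overline{e'_j}$. Once balance is secured, the place where a naive greedy lifting breaks down, namely closing the lifted paths up into genuine cycles, is handled for free by the cycle-decomposition theorem, and the multiplicity bookkeeping together with the well-definedness are then formal.
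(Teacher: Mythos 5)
Your proof is correct, but it takes a genuinely different route from the paper. The paper argues by induction on the degree $n$: after reducing to $\Gamma_2=R'$, it peels off one lifted cycle $R$ at a time, decrements $n_v$ and $n_e$ along $R$, and checks that the residual data is again a finite flat morphism of degree $n-\deg(R/R')$; the well-definedness of $\sum_i R_i$ is then read off from the multiplicity condition exactly as you do. Your construction instead produces all the $R_i$ simultaneously, as the elementary cycles of an Euler-type decomposition of the cyclically layered fibre graph, with the only non-formal input being the balance condition at each layered vertex. You correctly isolate the real content there: the out-degree identity is an axiom, while the in-degree identity must be manufactured from $n_e=n_{\overline e}$ by passing to reversed edges (this does implicitly use that $\phi_E$ commutes with $e\mapsto\overline e$, which the paper's definition does not state but clearly intends, and which its own inductive step needs as well). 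What your approach buys is that it avoids the unproved assertion in the paper that the reduced data $(\Gamma_1',n_v',n_e')$ still satisfies all the axioms, and it yields the explicit identity $\sum_{i}R_i=\sum_{e:\,\phi_E(e)\in\mathcal{E}(R')}n_e\,e$ in $\Ker(d)$, which makes both the independence claim and the later computation of $\phi^*$ transparent; what the paper's induction buys is brevity. One caveat you share with the paper: an elementary cycle of the layered graph may wind $d>1$ times around the layers, so its projection $R_i$ satisfies $\phi(R_i)=d\cdot R'$ in $H_1(\Gamma_2,\mathbb{Q}_\ell)$ rather than literally $\phi(R_i)=R'$; the paper's own lift $R$ has the same feature (that is what $\deg(R/R')$ measures), so the statement must be read with this looseness in either treatment.
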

\begin{proof}
By replacing $\Gamma_1$ and $\Gamma_2$ by 
their subgraphs, 
we may assume $\Gamma_2 =R'$. 

We prove the first claim by induction on $n$. 
If $n=1$, the claim is trivial. 
We assume $n\geq 2$. 
By the surjectivity of 
$s \colon \phi_E ^{-1} (e') \to \phi_V ^{-1} \bigl( s(e') \bigr)$ 
for $e' \in \mathcal{E}(\Gamma_2 )$, 
we can easily find a cycle 
$R$ in $\Gamma_1$ such that 
$\phi (R)=R'$. 
Then we put 
\[
 n_v '=
 \begin{cases}
 n_v -1 & \textrm{if } v \in \mathcal{V}(R) \\ 
 n_v & \textrm{if } v \notin \mathcal{V}(R)
 \end{cases} 
 \quad \textrm{and} \quad 
 n_e ' =
 \begin{cases}
 n_e -1 & \textrm{if } e \in \mathcal{E}(R) \\ 
 n_e & \textrm{if } e \notin \mathcal{E}(R) 
 \end{cases} 
\]
for $v \in \mathcal{V}(\Gamma_1 )$ and 
$e \in \mathcal{E}(\Gamma_1 )$. 
We consider the subgraph $\Gamma_1 '$ 
of $\Gamma_1$ obtained from $\Gamma_1$ by 
removing $v \in \mathcal{V}(\Gamma_1 )$ and 
$e \in \mathcal{E}(\Gamma_1 )$ 
such that $n_v '=0$ and $n_e '=0$. 
We define a positive integer 
$\deg (R/R')$ by 
\[
 \phi (R)= \deg (R/R') R' \in H_1 (\Gamma_2 ,\mathbb{Q}_{\ell} ). 
\]
Then the restriction of $\phi$ gives 
a finite flat morphism
$\phi' \colon \Gamma_1 ' \to \Gamma_2$ of degree 
$n-\deg (R/R')$ and 
it suffices to show the claim for $\phi'$. 
This follows from the induction hypothesis. 

The last claim follows from 
the condition 
$\bigl| \{ 1 \leq i \leq m \mid e \in \mathcal{E}(R_i ) \} 
 \bigr| =n_e $ for all $e \in \mathcal{E}(\Gamma_1 )$. 
\end{proof}

We define 
$\mathbb{Q}_{\ell}$-linear maps 
$\phi_* \colon 
 H_1 (\Gamma_1, \mathbb{Q}_{\ell})
 \to H_1 (\Gamma_2, \mathbb{Q}_{\ell})$ 
and 
$\phi^* \colon 
 H_1 (\Gamma_2, \mathbb{Q}_{\ell})
 \to H_1 (\Gamma_1, \mathbb{Q}_{\ell})$ 
by 
$\phi_* (R) =\phi (R)$ for a cycle $R$ in $\Gamma_1$ and 
$\phi^* (R')=\sum_{i=1} ^m R_i$ 
for a cycle $R'$ in $\Gamma_2$, 
where 
$R_1 ,\ldots , R_m$ are 
as in Proposition \ref{pullcycle}. 

We define 
$\phi_* \colon H^1 (\Gamma_1, \mathbb{Q}_{\ell})
 \to H^1 (\Gamma_2, \mathbb{Q}_{\ell})$ 
and 
$\phi^* \colon 
 H^1 (\Gamma_2, \mathbb{Q}_{\ell})
 \to H^1 (\Gamma_1, \mathbb{Q}_{\ell})$ 
as the dual $\mathbb{Q}_{\ell}$-linear maps of 
$\phi^* \colon 
 H_1 (\Gamma_2, \mathbb{Q}_{\ell})
 \to H_1 (\Gamma_1, \mathbb{Q}_{\ell})$ 
and 
$\phi_* \colon H_1 (\Gamma_1, \mathbb{Q}_{\ell})
 \to H_1 (\Gamma_2, \mathbb{Q}_{\ell})$ 
respectively. 

\section{Formal model of rigid curve}
Let $W$ be a wide open rigid curve with 
a semi-stable covering 
$\mathcal{S} =\{ (U_i ,U_i ^\mathrm{u} ) 
 \mid i \in I \}$ over $K$. 
In this section, we construct a formal model of 
$W$ from the semi-stable covering $\mathcal{S}$. 
First, we recall some facts from 
\cite{CMp^3}. 

\begin{prop}\cite[Proposition 2.21]{CMp^3}\label{affi}
Let $C$ be a proper smooth curve over $K$. 
Let $L$ be a finite Galois extension of $K$, 
and let $T$ be a finite nonempty Galois stable 
subset of $C(L)$. 
Suppose that $\{ D_t \mid t \in T \}$ is a 
Galois stable collection of disjoint open disks 
over $L$ in $C$ such that 
$D_t \cap T=\{ t \}$ for all $t \in T$. 
Then $C \setminus (\bigcup_{t \in T} D_t )$ 
is a one-dimensional affinoid over $K$. 
\end{prop}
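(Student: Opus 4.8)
The plan is to reduce by Galois descent to a purely geometric statement over $L$, and then to exhibit the complement as the tube of an affine open subscheme in the special fibre of a carefully chosen formal model.

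First I would set $G=\Gal(L/K)$ and $U=\bigcup_{t\in T}D_t\subset C_L$. Since $T$ and the family $\{D_t\}$ are $G$-stable and $G$ acts on $C_L$ through the second factor, both $U$ and $Y:=C_L\setminus U$ carry a canonical descent datum, so $Y$ descends to a rigid space $X$ over $K$ with $X_L=Y$. Because affinoidness of a quasi-separated rigid space can be checked after a finite extension of the base field, it suffices to prove that $Y$ is affinoid over $L$. At this point I would also be free to enlarge $L$ to a finite Galois extension $L'/K$ over which $C$ admits an admissible formal model with the disks in good position: the hypotheses persist over $L'$ (the set $T$ and the family $\{D_t\}$ remain $\Gal(L'/K)$-stable), and proving $Y_{L'}$ affinoid still yields the claim after descent along $L'/K$.

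Next I would invoke the structure theory of formal models of curves to choose an admissible formal $\mathcal{O}_L$-model $\mathcal{C}$ of $C_L$ for which every $D_t$ is a residue disk, that is, $D_t=\mathrm{sp}^{-1}(x_t)$ for a distinct smooth closed point $x_t$ of the special fibre $\mathcal{C}_s$, where $\mathrm{sp}\colon C_L\to\mathcal{C}_s$ is the specialization map. This is possible because a disjoint open disk in a smooth curve is exactly the formal fibre of a smooth point of a suitable model, and finitely many disjoint disks can be separated by formal blow-ups. With such a model one has $U=\mathrm{sp}^{-1}(\{x_t\mid t\in T\})$, hence $Y=\mathrm{sp}^{-1}(V)$ with $V:=\mathcal{C}_s\setminus\{x_t\mid t\in T\}$ an open subscheme.

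Finally I would arrange $V$ to be affine and conclude. If some irreducible component of $\mathcal{C}_s$ contains none of the $x_t$, I would contract it, together with any maximal connected subcurve disjoint from the $x_t$, within the category of admissible formal models; this leaves each $D_t$ unchanged, since the $x_t$ lie away from the contracted locus, but produces a model whose special fibre has every irreducible component carrying at least one $x_t$. Then each irreducible component of $V$ is a projective curve with a point deleted, so $V_{\mathrm{red}}$ has no complete component and $V$ is therefore affine (affineness being insensitive to nilpotents). Consequently $Y=\mathrm{sp}^{-1}(V)$ is the rigid generic fibre of the open formal subscheme $\mathcal{V}\subset\mathcal{C}$ with $\mathcal{V}_s=V$, which is an affine admissible formal scheme, so $Y$ is affinoid over $L$; combined with the descent of the first paragraph this shows $C\setminus\bigl(\bigcup_{t\in T}D_t\bigr)$ is a one-dimensional affinoid over $K$. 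The heart of the argument, and the step I expect to demand the most care, is this model-theoretic one: realizing the prescribed open disks as residue disks and performing the contraction so that the reduction of $Y$ becomes affine, which amounts to producing the canonical reduction of $Y$. It is precisely here that the nonemptiness of $T$ enters, for otherwise $Y=C_L$ is proper rather than affinoid.
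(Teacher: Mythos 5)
The paper does not prove this proposition: it is imported verbatim from Coleman--McMurdy \cite[Proposition 2.21]{CMp^3} and used as a black box, so there is no internal argument to compare yours against. Taken on its own terms, your proof is a sound and fairly standard route to the statement, and the overall architecture (Galois descent to reduce to $L$, then exhibit the complement as the tube $\mathrm{sp}^{-1}(V)$ of an affine open $V$ in the special fibre of a suitable model, using that $T\neq\emptyset$ to guarantee no complete component survives in $V$) is correct. The descent step is fine: a quasi-compact separated rigid space that becomes affinoid after a finite Galois extension is affinoid, by descent of affinoid algebras along $L/K$. The contraction and the affineness of $V$ are also fine, though for the contraction you should pass to an algebraic model of the proper curve $C_L$ (available by algebraization) where contraction of vertical divisors not containing a whole fibre is standard; and the affineness of a reduced curve with no complete irreducible component follows from Chevalley's theorem applied to the normalization.

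The one place where the argument is not yet a proof is the step you yourself single out: the assertion that the given disjoint open disks $D_t$ can simultaneously be realized as the formal fibres $\mathrm{sp}^{-1}(x_t)$ of distinct smooth closed points of a single admissible formal model. This is true, but it is a genuine theorem (semi-stable reduction adapted to a finite family of marked disks, in the style of Bosch--L\"utkebohmert), not something that follows from ``finitely many disjoint disks can be separated by formal blow-ups'' without further work; note also that it requires first enlarging $L$ so that the radii of the $D_t$ lie in $|L^{\times}|$, since over a discretely valued field the formal fibre of a smooth point always has radius in the value group. As written, this is a citation-shaped gap rather than an error: the statement you are invoking there carries essentially all of the difficulty of the proposition, so either cite it precisely or prove it, and the rest of your argument then goes through.
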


\begin{lem}\cite[Lemma 2.24]{CMp^3}\label{morph}
Let $f \colon X \to Y$ be a morphism between 
smooth one-dimensional affinoid rigid spaces over $K$. 
We assume that $\overline{X}$ is irreducible. 
\begin{enumerate}
\item[(i)] 
If 
$\overline{f} \colon \overline{X} \to \overline{Y}$ is 
a surjection, then $f$ is a surjection. 
\item[(ii)] 
If $\overline{f} (\overline{X}) \subset \overline{Y}$ 
is an open affine subset and 
$f \colon X(\mathbf{C} ) \to Y(\mathbf{C})$ is an injection, 
then $\overline{f}$ is an injection. 
\end{enumerate}
\end{lem}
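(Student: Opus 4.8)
The plan is to exploit the reduction map $\mathrm{Red}$, which for a smooth one-dimensional affinoid over $K$ is surjective onto the closed points of the reduction and whose fibres are completely understood: the fibre of $\mathrm{Red}_X$ over a smooth point of $\overline{X}$ is an open unit disk (a residue disk), while the fibre over an ordinary double point is an open annulus. Both parts of the lemma will be reduced to one local statement: \emph{the map that $f$ induces from the residue disk (resp. annulus) of a point $\bar{x} \in \overline{X}$ to that of $\overline{f}(\bar{x}) \in \overline{Y}$ is surjective.}

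For (i) I would first record that $\mathrm{Red}_X$ and $\mathrm{Red}_Y$ are surjective and fit into the commutative square relating $f$ with $\overline{f}$. Given $y \in Y(\mathbf{C})$, put $\bar{y} =\mathrm{Red}_Y (y)$; since $\overline{f}$ is surjective, choose $\bar{x}$ with $\overline{f}(\bar{x}) =\bar{y}$. As $y$ lies in $\mathrm{Red}_Y^{-1}(\bar{y})$, it suffices to show that $f$ carries $\mathrm{Red}_X^{-1}(\bar{x})$ onto $\mathrm{Red}_Y^{-1}(\bar{y})$. When $\bar{x}$ and $\bar{y}$ are smooth points, I would trivialise the two residue disks by local parameters $t$ at $\bar{x}$ and $u$ at $\bar{y}$ and expand $u \circ f =\sum_{i \geq 1} a_i t^i$ with $a_i \in \mathcal{O}_{\mathbf{C}}$. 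Because $\overline{f}$ is nonconstant between smooth curves, its completed local form at $\bar{x}$ is $u =t^e \cdot (\mathrm{unit})$ for the ramification index $e$, which forces $\bar{a}_i =0$ for $i<e$ and $\bar{a}_e \neq 0$; a Newton-polygon argument then shows that for each $c$ with $|c|<1$ the series $\sum a_i t^i -c$ has a root $t_0$ with $|t_0| =|c|^{1/e}<1$, so the induced map of open disks is surjective. The double-point case is identical with open annuli in place of disks, the local shape of $\overline{f}$ again being $t \mapsto t^e \cdot (\mathrm{unit})$.

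For (ii) I would argue by contraposition. Suppose $\overline{f}$ is not injective, say $\overline{f}(\bar{x}_1) =\overline{f}(\bar{x}_2) =\bar{y}$ with $\bar{x}_1 \neq \bar{x}_2$. The hypothesis that $\overline{f}(\overline{X}) \subset \overline{Y}$ is open affine guarantees that $\overline{f}$ is a surjection of $\overline{X}$ onto this open affine target, so the local surjectivity proved in (i) is available at both $\bar{x}_1$ and $\bar{x}_2$: each of $\mathrm{Red}_X^{-1}(\bar{x}_1)$ and $\mathrm{Red}_X^{-1}(\bar{x}_2)$ maps onto $\mathrm{Red}_Y^{-1}(\bar{y})$ under $f$. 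Choosing any single point $y \in \mathrm{Red}_Y^{-1}(\bar{y})$ and a preimage in each fibre yields points $x_1' ,x_2'$ with $f(x_1') =f(x_2') =y$; these are distinct since they reduce to the distinct points $\bar{x}_1, \bar{x}_2$. This contradicts the injectivity of $f$ on $\mathbf{C}$-points, so $\overline{f}$ is injective.

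The step I expect to be the main obstacle is the local surjectivity underlying both parts: verifying carefully that the analytic map induced on a residue disk by a possibly ramified map of reductions hits every point of the full open unit disk, and running the same Newton-polygon analysis through for the open annuli sitting over ordinary double points, where one must track both boundary slopes rather than a single one. The openness-of-image hypothesis in (ii) is exactly what makes $\overline{f}$ surjective onto its image, so that this local input applies at both preimages; without it the reduction map over the image need not present the clean disk/annulus fibres on which the argument rests.
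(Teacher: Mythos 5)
First, note that the paper does not prove this lemma at all: it is quoted from Coleman--McMurdy \cite[Lemma 2.24]{CMp^3} and used as a black box, so there is no internal proof to compare against. Judged on its own terms, your proposal has a genuine gap in the single local claim on which both parts rest, namely that for a \emph{chosen} $\bar{x} \in \overline{f}^{-1}(\bar{y})$ the map $f$ carries $\mathrm{Red}_X^{-1}(\bar{x})$ \emph{onto} $\mathrm{Red}_Y^{-1}(\bar{y})$. This is false in general. If $\bar{x}$ is a smooth point of $\overline{X}$ while $\bar{y}$ is an ordinary double point of $\overline{Y}$, the source fibre is an open disk and the target fibre an open annulus $\{ r<|u|<1 \}$; then $u \circ f$ is a zero-free bounded analytic function on an open disk, hence of constant absolute value, so the image of the entire residue disk lies in a single circle and cannot exhaust the annulus. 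What part (i) actually asserts is only that the union of the formal fibres over \emph{all} points of $\overline{f}^{-1}(\bar{y})$ covers $\mathrm{Red}_Y^{-1}(\bar{y})$, and that cannot be established one residue class at a time by your Newton-polygon computation. Moreover, the lemma carries no semi-stability hypothesis: the reduction of a smooth one-dimensional affinoid can have arbitrary singularities, whose formal fibres are general one-dimensional wide opens (possibly of positive genus, with several ends), so the dichotomy ``residue disk or residue annulus'' underlying your whole analysis is not available. (Your Newton-polygon argument is fine in the smooth-to-smooth case, where $|a_i|<1$ for $i<e$ and $|a_e|=1$ do follow from the completed local description of $\overline{f}$.)

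The same unproved local surjectivity is what drives your argument for (ii): to produce two distinct preimages of one point $y$ you need the residue classes of \emph{both} $\bar{x}_1$ and $\bar{x}_2$ to surject onto $\mathrm{Red}_Y^{-1}(\bar{y})$, or at least to have intersecting images, and neither is justified --- even granting (i), you only know that the images of all residue classes in $\overline{f}^{-1}(\bar{y})$ \emph{together} cover the target fibre, which is compatible with their being pairwise disjoint. You have also misidentified the role of the hypothesis that $\overline{f}(\overline{X})$ is open affine: it is not needed to make $\overline{f}$ surjective onto its set-theoretic image (that is automatic); its function is to guarantee that the image is the reduction of the affinoid subdomain $\mathrm{Red}_Y^{-1}\bigl( \overline{f}(\overline{X}) \bigr)$, so that one may replace $Y$ by that subdomain and be in the situation of (i). A correct proof must argue on whole formal fibres of points of $\overline{Y}$, or globally via the structure of $A^{\circ}(X)$ over $A^{\circ}(Y)$, rather than residue disk by residue disk.
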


\begin{lem}\cite[Proposition 2.8]{CMp^3}\label{sst}
Let $X$ be a smooth one-dimensional affinoid rigid curve 
such that $||A(X)||_X =|K|$. 
We assume that $P \in \overline{X}(\overline{k})$ and 
$\mathrm{Red}_X^{-1} (P)$ is isomorphic to an open annulus over $K$. 
Then $P$ is an ordinary double point of $\overline{X}$. 
\end{lem}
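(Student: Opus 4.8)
The plan is to show that the complete local ring of $\overline{X}$ at $P$ is that of an ordinary double point, namely $\overline{k}[[u,v]]/(uv)$; this is exactly the assertion that $\overline{X}$ is semi-stable at $P$. Since semi-stability at $P$ is a local and geometric condition, I would first pass to a convenient situation: after a finite extension of $K$ I may assume that $P$ is $k$-rational and that the annulus has width in the value group, so that $\mathrm{Red}^{-1}(P) \cong A_K (r,1)$ with $r=|a|$ for some $a \in \mathfrak{m}_K \setminus \{0\}$. The hypothesis $\|A(X)\| = |K|$ guarantees that $\overline{X} = \Spec (A^{\circ}(X)/A^{\circ \circ}(X))$ is reduced and that $\mathfrak{X} = \Spf (A^{\circ}(X))$ is an admissible formal $\mathcal{O}_K$-model of $X$ whose special fiber is $\overline{X}$ and whose specialization map is $\mathrm{Red}_X$.

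The heart of the argument is to read off the formal structure of $\overline{X}$ near $P$ from the annulus. The tube $\mathrm{Red}^{-1}(P)$ is the rigid generic fiber of the formal completion of $\mathfrak{X}$ along the closed point $P$, i.e.\ the formal fiber of $\mathfrak{X}$ at $P$. Fixing a coordinate $t$ on $\mathrm{Red}^{-1}(P) \cong A_K (r,1)$, the functions $u=t$ and $v=a/t$ are analytic on the tube, satisfy $|u|<1$ and $|v|<1$ everywhere, and obey the single relation $uv=a$. I would then show that $u$ and $v$ arise from elements of $A^{\circ}(X)$ whose reductions $\bar{u},\bar{v}$ generate the maximal ideal of $\widehat{\mathcal{O}}_{\overline{X},P}$, and that the resulting $\mathcal{O}_K$-algebra homomorphism $\mathcal{O}_K[[u,v]]/(uv-a) \to \widehat{\mathcal{O}}_{\mathfrak{X},P}$ is an isomorphism. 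Reducing this isomorphism modulo $\mathfrak{m}_K$ then gives $\widehat{\mathcal{O}}_{\overline{X},P} \cong \overline{k}[[\bar{u},\bar{v}]]/(\bar{u}\bar{v})$, the complete local ring of a node, which is what we want.

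The technical core, and the step I expect to be the main obstacle, is precisely this last identification: showing that the two ends $|t|\to r$ and $|t|\to 1$ of the annulus correspond to exactly two branches of $\overline{X}$ at $P$, that each branch is smooth, and that they are transversal, so that $\widehat{\mathcal{O}}_{\mathfrak{X},P}$ really is $\mathcal{O}_K[[u,v]]/(uv-a)$ and not a more degenerate (e.g.\ tangential or higher-order) configuration. This requires the structure theory of reductions of one-dimensional affinoids in the style of Bosch--L\"{u}tkebohmert: one must control $A^{\circ}(X)$ in a formal neighbourhood of $P$ finely enough to match the boundary behaviour of the annulus with the branches of the reduction. Each end of the annulus, being a semi-open annulus that reduces onto a smooth formal disk, forces the corresponding branch to be smooth; and the fact that the tube is a single standard open annulus of positive width, realising the exact relation $uv=a$ with $a$ of positive valuation, forces the two tangent directions to be distinct. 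Granting this, $P$ has precisely two smooth transversal branches, hence is an ordinary double point, and therefore $\overline{X}$ is semi-stable at $P$.
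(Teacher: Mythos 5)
The paper gives no argument for this lemma at all: it is quoted verbatim from \cite[Proposition 2.8]{CMp^3}, whose proof in turn rests on the Bosch--L\"utkebohmert structure theory of formal fibers of affinoid reductions. Your sketch follows exactly that standard route --- identify $\mathrm{Red}^{-1}(P)$ with the generic fiber of the formal completion of $\Spf A^{\circ}(X)$ at $P$, and show that an annular formal fiber of height $v(a)>0$ forces this completion to be $\mathcal{O}_K[[u,v]]/(uv-a)$, hence a node on the reduction --- and the step you single out as the ``technical core'' (two smooth transversal branches, one for each end of the annulus) is precisely the content of the cited theorem, so your proposal is correct in strategy and consistent with the paper's treatment, but it is a reduction to the same literature rather than an independent proof of that core.
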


For $i \in I$, let $\{ V_j \mid j \in J_i \}$ be the 
set of the connected components of 
$U_i \setminus (U_i ^\mathrm{u} \cup \bigcup_{i' \in I, i' \neq i} U_{i'} )$. 
For different $i_1 ,i_2 \in I$, 
let $\{ V_j \mid j \in J_{i_1 ,i_2 } \}$ be the 
set of the connected components of 
$U_{i_1} \cap U_{i_2}$. 
For $i \in I$ and $j \in J_i$, 
let $V_j ^{\mathrm{s}}$ be the semi-open annulus 
obtained by adding a circle $C_j$ to 
$V_j$ from the opposite side to $U_i ^\mathrm{u}$. 
We define a rigid space $W'$ 
as $W \cup \bigcup_{i \in I, j \in J_i} V_j ^{\mathrm{s}}$. 
We put $\widetilde{I} =I \cup \bigcup_{i \in I} J_i$, 
and 
\[
 (U_i ,U_i ^\mathrm{u} )= 
\begin{cases}
 (U_i ,U_i ^\mathrm{u} ) & \textrm{if } i \in I \\
 (V_i ^{\mathrm{s}} ,C_i ) & \textrm{if } i' \in I \textrm{ and } i \in J_{i'} 
\end{cases}
\]
for $i \in \widetilde{I}$. 
We put 
\[
 J_{i_1 ,i_2} = 
\begin{cases}
 J_{i_1 ,i_2} & \textrm{if } i_1, i_2 \in I \\ 
 \{ i_1 \} & \textrm{if } i_2 \in I \textrm{ and } i_1 \in J_{i_2} \\ 
 \{ i_2 \} & \textrm{if } i_1 \in I \textrm{ and } i_2 \in J_{i_1} \\ 
 \emptyset & \textrm{otherwise } 
\end{cases}
\]
for different $i_1 ,i_2 \in \widetilde{I}$. 

\begin{prop}\label{compati} 
For $i \in \widetilde{I}$, 
let $S_i$ be a nonempty $G_k$-stable collection of 
points of the smooth locus of $\overline{U_i ^{\mathrm{u}}}$. 
We put 
$U(S_i )=U_i ^\mathrm{u} \setminus 
 \bigl( \bigcup_{s \in S_i} \mathrm{Red}_{U_i ^\mathrm{u}} ^{-1} (s) \bigr)$ 
for $i \in \widetilde{I}$
and 
$X_j (S_{i_1} ,S_{i_2} )= 
 U (S_{i_1} ) \cup U (S_{i_2} ) \cup V_j$ 
for different $i_1 ,i_2 \in \widetilde{I}$ and $j \in J_{i_1 ,i_2}$. 
Then $U (S_i )$ and 
$X_j (S_{i_1} ,S_{i_2} )$ are 
affinoid rigid spaces over $K$, and 
$\mathrm{Red}_{X_j (S_{i_1} ,S_{i_2} )}$ 
is compatible with 
$\mathrm{Red}_{U (S_{i_1} )}$ and 
$\mathrm{Red}_{U (S_{i_2} )}$ 
for $i \in \widetilde{I}$ and 
different $i_1 ,i_2 \in \widetilde{I}$ 
and $j \in J_{i_1 ,i_2}$. 
\end{prop}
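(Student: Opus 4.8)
The plan is to deduce both affinoid-ness assertions from Theorem \ref{cptif} together with Proposition \ref{affi}, reducing each to the statement that a proper smooth curve minus a Galois-stable collection of pairwise disjoint open disks is affinoid, and then to read off the reductions using Lemma \ref{sst} and Lemma \ref{morph}. Throughout I would use the standard fact that, for a smooth one-dimensional affinoid $X$ with $\|A(X)\|=|K|$, the reduction map $\mathrm{Red}_X$ sends the preimage of a smooth closed point of $\overline{X}$ to an open disk and the preimage of an ordinary double point to an open annulus. In particular each $\mathrm{Red}_{U_i^{\mathrm{u}}}^{-1}(s)$ with $s \in S_i$ is an open disk, and for distinct points these are pairwise disjoint.

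For the affinoid-ness of $U(S_i)$, I would first complete the basic wide open curve $U_i$ to a proper smooth curve $C_i$ over $K$ by Theorem \ref{cptif}, gluing a closed disk to each connected component of $U_i \setminus U_i^{\mathrm{u}}$. Since these components are annuli $A_K(r,1)$, the complement $C_i \setminus U_i^{\mathrm{u}}$ is a disjoint union of finitely many open disks $D_1,\dots,D_r$, and these are disjoint from the residue disks $\mathrm{Red}_{U_i^{\mathrm{u}}}^{-1}(s)$, $s \in S_i$, which lie inside $U_i^{\mathrm{u}}$. Hence
\[
 U(S_i)=C_i \setminus \Bigl( \bigsqcup_{k=1}^{r} D_k \sqcup
 \bigsqcup_{s \in S_i} \mathrm{Red}_{U_i^{\mathrm{u}}}^{-1}(s) \Bigr)
\]
is the complement in $C_i$ of a finite $G_k$-stable collection of pairwise disjoint open disks. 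After passing to a finite Galois extension $L/K$ over which all the disks are defined and choosing a Galois-equivariant point in each (for instance a point fixed by the stabilizer of each disk), Proposition \ref{affi} shows that $U(S_i)$ is affinoid over $K$.

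For $X_j(S_{i_1},S_{i_2})$ I would run the same argument relative to the node. First I form a wide open curve $W_j$ whose underlying affinoid is $U_{i_1}^{\mathrm{u}} \cup V_j \cup U_{i_2}^{\mathrm{u}}$ and whose boundary annuli are the remaining boundary components of $U_{i_1}^{\mathrm{u}}$ and $U_{i_2}^{\mathrm{u}}$. Since $V_j$ is a connected component of $U_{i_1}\cap U_{i_2}$, it is an open annulus whose two ends are capped by $U_{i_1}^{\mathrm{u}}$ and $U_{i_2}^{\mathrm{u}}$, so by Lemma \ref{sst} the reduction of $W_j$ is the union of $\overline{U}_{i_1}^{\mathrm{u}}$ and $\overline{U}_{i_2}^{\mathrm{u}}$ glued at the ordinary double point to which $V_j$ reduces. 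Completing $W_j$ to a proper smooth curve $C_j$ by Theorem \ref{cptif} and arguing as before, $X_j(S_{i_1},S_{i_2})$ is the complement in $C_j$ of the boundary disks together with the residue disks $\mathrm{Red}^{-1}(s)$ for $s \in S_{i_1}\cup S_{i_2}$; the open annulus $V_j$ is interior to $W_j$ and is therefore not removed, so Proposition \ref{affi} again gives that $X_j(S_{i_1},S_{i_2})$ is affinoid.

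Finally, for the compatibility of the reductions I would analyze the open immersions $\iota_k \colon U(S_{i_k}) \hookrightarrow X_j(S_{i_1},S_{i_2})$ through the induced maps $\overline{\iota}_k \colon \overline{U(S_{i_k})} \to \overline{X_j(S_{i_1},S_{i_2})}$. Each $\iota_k$ is injective on $\mathbf{C}$-points and its image reduces onto an open affine subset, so Lemma \ref{morph}(ii) shows that $\overline{\iota}_k$ is injective, hence an open immersion onto its image; the two images, together with the node to which $V_j$ reduces, exhaust $\overline{X_j(S_{i_1},S_{i_2})}$ by Lemma \ref{morph}(i). This exhibits $\overline{U(S_{i_1})}$ and $\overline{U(S_{i_2})}$ as the two components of $\overline{X_j(S_{i_1},S_{i_2})}$ and makes the reduction squares commute, which is the asserted compatibility. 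I expect the main obstacle to be the middle step: verifying that gluing the two affinoids along the open annulus $V_j$ really produces a wide open curve to which Theorem \ref{cptif} applies, and that in the resulting proper curve $C_j$ the loci removed to recover $X_j(S_{i_1},S_{i_2})$ form a genuine disjoint union of open disks with the node left interior, so that Proposition \ref{affi} is applicable.
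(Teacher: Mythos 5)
Your overall strategy (compactify, remove a Galois-stable collection of disjoint open disks, apply Proposition \ref{affi}, then identify reductions via Lemma \ref{morph}) is the right one, but the paper executes it \emph{globally}, and your local version has genuine gaps. The paper forms a single auxiliary affinoid $W' (\{ S_i \}_{i \in \widetilde{I}} ) = W' \setminus \bigl( \bigcup_{s \in S_i ,i \in \widetilde{I}} \mathrm{Red}_{U_i ^\mathrm{u}} ^{-1} (s) \bigr)$ by applying Theorem \ref{cptif} once to all of $W$; it then identifies each $U(S_i)$, each $V_j$, and hence each $X_j (S_{i_1} ,S_{i_2} )$ as $\mathrm{Red}^{-1}$ of an open affine subset (resp.\ a closed point) of the reduction of that one affinoid, after which both the affinoidness of $X_j (S_{i_1} ,S_{i_2} )$ and the compatibility of the reduction maps follow in one stroke from \cite[Lemma 4.8.1]{FvdP}.

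The first gap is in your middle step, which you yourself flag: the wide open curve $W_j$ with underlying affinoid $U_{i_1}^{\mathrm{u}} \cup V_j \cup U_{i_2}^{\mathrm{u}}$ need not exist inside $W$. If $U_{i_1} \cap U_{i_2}$ has a second connected component $V_{j'}$, then $V_{j'} \subset U_{i_1} \cup U_{i_2}$ is an open annulus both of whose ends limit onto your proposed underlying affinoid, so conditions (2) and (3) in the definition of a wide open curve fail and Theorem \ref{cptif} cannot be invoked to cap $V_{j'}$ with a disk. An abstract gluing of $U_{i_1}$ and $U_{i_2}$ along $V_j$ alone repairs this, but then you must additionally verify that the union $U(S_{i_1}) \cup U(S_{i_2}) \cup V_j$ formed inside that abstract curve coincides, as a rigid space, with the one defined inside $W$ (note that the three pieces are pairwise disjoint as subsets, so the ambient space genuinely matters here). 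The second gap is in the compatibility step: you assert that the image of $\overline{\iota}_k$ is an open affine subset, but a priori it could be a single closed point, and this hypothesis of Lemma \ref{morph}(ii) must be proved, not assumed; the paper excludes the point case by a connectedness argument (if the image were a point $P$, then $U(S_{i_k})$ would lie in $\mathrm{Red}^{-1}(P)$, which is disconnected from the removed residue classes $\mathrm{Red}_{U_{i_k}^{\mathrm{u}}}^{-1}(s)$, a contradiction). Relatedly, Lemma \ref{morph}(i) does not give your ``exhaustion'' claim directly; it is used in the opposite direction, to show that $U(S_{i_k})$ equals all of $\mathrm{Red}^{-1}$ of its image once that image is known to be open affine. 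Working inside the single global affinoid, as the paper does, makes all of these points come out uniformly and is the cleaner route.
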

\begin{proof}
We put 
$W' (\{ S_i \}_{i \in \widetilde{I}} ) =
 W' \setminus 
 \bigl( \bigcup_{s \in S_i ,i \in \widetilde{I}} 
 \mathrm{Red}_{U_i ^\mathrm{u}} ^{-1} (s) \bigr)$. 
Let $C$ be a proper smooth curve over $K$ obtained from 
$W$ as in Theorem \ref{cptif}. 
Then $W' (\{ S_i \}_{i \in \widetilde{I}} )$ 
is obtained from $C$ 
removing a $G_K$-stable union of disjoint open disks. 
Therefore 
$W' (\{ S_i \}_{i \in \widetilde{I}} )$ 
is an affinoid rigid space over $K$ by Proposition \ref{affi}. 

We take $i \in \widetilde{I}$. 
We know that 
$U (S_i )$ is 
an affinoid rigid space 
by \cite[Lemma 4.8.1.(a)]{FvdP}. 
The natural inclusion map 
$j_{U(S_i )} \colon U(S_i ) \to 
 W' (\{ S_i \}_{i \in \widetilde{I}} )$ 
induces a map on the reduction 
$\overline{j}_{U(S_i )} \colon \overline{U}(S_i ) \to 
 \overline{W'} (\{ S_i \}_{i \in \widetilde{I}} )$. 
Let $\mathrm{Im}(\overline{j}_{U(S_i )})$ be 
the image of $\overline{j}_{U(S_i )}$. 
Then $\mathrm{Im}(\overline{j}_{U(S_i )})$ is 
a point or an affine open subset of 
$\overline{W'} (\{ S_i \}_{i \in \widetilde{I}} )$. 

We assume that 
$\mathrm{Im}(\overline{j}_{U(S_i )})$ is a point $P$. 
Then 
$\mathrm{Red}_{W' (\{ S_i \}_{i \in \widetilde{I}} )}
 ^{-1} (P)$ 
is not connected to 
$\bigcup_{s \in S_i} \mathrm{Red}_{U_i ^\mathrm{u}} ^{-1} (s)$. 
Therefore 
$U(S_i ) \subset 
 \mathrm{Red}_{W' (\{ S_i \}_{i \in \widetilde{I}} )} 
 ^{-1} (P)$ 
is not connected to 
$\bigcup_{s \in S_i} \mathrm{Red}_{U_i ^\mathrm{u}} ^{-1} (s)$. 
This is a contradiction. 
Thus we have proved that 
$\mathrm{Im}(\overline{j}_{U(S_i )})$ is 
an affine open subset of 
$\overline{W'} (\{ S_i \}_{i \in \widetilde{I}} )$. 
Then the map 
$\overline{j}_{U(S_i )}$ is an injection by Lemma \ref{morph} (ii). 
Further, we have that 
$U(S_i )= 
 \mathrm{Red}_{W' (\{ S_i \}_{i \in \widetilde{I}} )} 
 ^{-1} \bigl( \mathrm{Im}(\overline{j}_{U(S_i )}) \bigr)$ 
by Lemma \ref{morph} (i). 
Let $Y(S_i )$ be the irreducible component of 
$\overline{W'} (\{ S_i \}_{i \in \widetilde{I}} )$ 
that contains 
$\mathrm{Im}(\overline{j}_{U(S_i )})$. 

We take $j \in J_{i_1 ,i_2}$ for different 
$i_1 ,i_2 \in \widetilde{I}$. 
Then 
$V_j = \mathrm{Red}_{W' (\{ S_i \}_{i \in \widetilde{I}} )}^{-1} ( y_j )$ 
for a point $y_j$ of 
$\overline{W'} (\{ S_i \}_{i \in \widetilde{I}} )$ 
by the connectedness of $V_j$. 
Further, we have that $y_j \in Y(S_{i_1} ) \cap Y(S_{i_2} )$, 
because $V_j$ is connected to 
$U(S_{i_1} )$ and $U(S_{i_2} )$. 

Therefore we have 
\[ 
 Y(S_i ) = \mathrm{Im}(\overline{j}_{U(S_i )}) 
 \cup \bigcup_{j \in J_{i,i'},\ i' \in \widetilde{I}} \{ y_j \}
\] 
for all $i \in \widetilde{I}$. 
Then 
$X_j (S_{i_1} ,S_{i_2} ) = 
 \mathrm{Red}_{W' (\{ S_i \}_{i \in \widetilde{I}} )} 
 ^{-1} \bigl( \mathrm{Im}(\overline{j}_{U(S_{i_1} )}) \cup 
 \mathrm{Im}(\overline{j}_{U(S_{i_2} )}) \cup \{ y_j \} \bigr)$ 
is an affinoid rigid space by 
\cite[Lemma 4.8.1.(a)]{FvdP}. 
The compatibility also follows from 
\cite[Lemma 4.8.1.(c)]{FvdP}. 
\end{proof}

For a formal scheme $\mathscr{X}$ over 
$\Spf \mathcal{O}_K$, 
the closed fiber $\mathscr{X}_k$ of $\mathscr{X}$ 
means the underlying reduced scheme of 
the ringed space 
$(\mathscr{X} ,\mathcal{O}_{\mathscr{X}} /\mathcal{I})$, 
where $\mathcal{I}$ is an ideal of definition of 
$\mathscr{X}$. 

\begin{thm}\label{const} 
Let $W$ be a wide open rigid curve with 
a semi-stable covering 
$\mathcal{S} =\{ (U_i ,U_i ^\mathrm{u} ) 
 \mid i \in I \}$ over $K$. 
Then $W$ has an associated 
semi-stable formal model over $\mathcal{O}_K$. 
\end{thm}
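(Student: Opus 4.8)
The plan is to glue the canonical formal models of the affinoids furnished by Proposition~\ref{compati} and then to correct the generic fibre at the ends, so that it becomes the wide open curve $W$ rather than a quasi-compact affinoid. First I fix, for each $i\in\widetilde{I}$, a nonempty $G_k$-stable collection $S_i$ of points in the smooth locus of $\overline{U}_i^{\mathrm{u}}$. Proposition~\ref{compati} then produces the affinoids $U(S_i)$ and $X_j(S_{i_1},S_{i_2})$ with compatible reductions, and its proof identifies the reduction of the ambient affinoid $W'(\{S_i\}_{i\in\widetilde{I}})$ as the union of the irreducible components $Y(S_i)$, glued precisely at the nodal points $y_j$ whose tubes are the annuli $V_j$; here $\overline{U(S_i)}=\mathrm{Im}(\overline{j}_{U(S_i)})$ is an affine open chart of $Y(S_i)$, while $\overline{X_j(S_{i_1},S_{i_2})}$ joins the two charts $\mathrm{Im}(\overline{j}_{U(S_{i_1})})$ and $\mathrm{Im}(\overline{j}_{U(S_{i_2})})$ at $y_j$.

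Next, for each affinoid $Z\in\{U(S_i),X_j(S_{i_1},S_{i_2})\}$ I would form its canonical formal model $\Spf A^{\circ}(Z)$, which is flat over $\mathcal{O}_K$ because $\{|f|_{\mathrm{sup}}\mid f\in\mathcal{O}_Z(Z)\}=|K|$, has special fibre $\overline{Z}$, and has generic fibre $Z$. Compatibility of the reductions in Proposition~\ref{compati} shows that for $i\in\{i_1,i_2\}$ the inclusion $U(S_i)\hookrightarrow X_j(S_{i_1},S_{i_2})$ induces an open immersion $\Spf A^{\circ}(U(S_i))\hookrightarrow\Spf A^{\circ}(X_j(S_{i_1},S_{i_2}))$ onto the open with special fibre the chart $\mathrm{Im}(\overline{j}_{U(S_i)})$. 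Gluing the models $\Spf A^{\circ}(X_j(S_{i_1},S_{i_2}))$ along these common opens $\Spf A^{\circ}(U(S_i))$ yields an admissible formal scheme $\mathscr{W}_0$ with special fibre $\overline{W'(\{S_i\}_{i\in\widetilde{I}})}$ and generic fibre the affinoid $W'(\{S_i\}_{i\in\widetilde{I}})$.

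It then remains to verify semi-stability and to pass from $\mathscr{W}_0$ to a model of $W$. The special fibre is reduced by construction. At each node $y_j$ the tube is the open annulus $V_j$, so Lemma~\ref{sst} shows that the reduction is semi-stable there, with local formal model $\Spf\mathcal{O}_K[x,y]/(xy-\pi)$ for a uniformiser $\pi$ of $K$; the same argument at the ordinary double points internal to each $Y(S_i)$, whose tubes are again open annuli, gives semi-stability at every singular point. To recover $W$ I would then correct the generic fibre: at each puncture created by deleting $\mathrm{Red}_{U_i^{\mathrm{u}}}^{-1}(s)$ with $s\in S_i$, $i\in I$, I reattach a formal open disc $\Spf\mathcal{O}_K[[t]]$ so as to restore the full interior of $W$, and at each outer end I discard the chart coming from the cap $V_j^{\mathrm{s}}$ and attach instead a formal completion whose generic fibre is the prescribed open annulus $A_K(r,1)$, for instance the formal spectrum of a ring of the shape $\mathcal{O}_K\langle u\rangle[[v]]/(uv-\pi^m)$. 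These modifications leave the special fibre semi-stable and produce a formal scheme $\mathscr{W}$ whose Berthelot generic fibre is $W$.

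I expect the main obstacle to lie in this last passage together with the gluing: one must check that the reduction compatibilities of Proposition~\ref{compati} genuinely lift from special fibres to open immersions of formal schemes, that the reattached discs and ends reproduce exactly the interior residue discs and the open annuli $A_K(r,1)$ dictated by the semi-stable covering so that $\mathscr{W}^{\mathrm{rig}}\cong W$, and that the resulting, no longer finite-type, formal scheme remains flat and admissible over $\mathcal{O}_K$. By comparison, semi-stability at the singular points is immediate once Lemma~\ref{sst} is available, since every node has an open annulus as its tube.
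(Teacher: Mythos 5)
Your first half---gluing the canonical formal models $\Spf A^{\circ}(Z)$ of the affinoids of Proposition \ref{compati} along the common opens $\Spf A^{\circ}(U(S_i))$, and getting semi-stability from Lemma \ref{sst} since every node has an open annulus as its tube---is in the spirit of the paper, which packages the gluing as a pure affinoid covering in the sense of \cite[Definition 4.8.3]{FvdP}. The genuine gap is the final ``correction of the generic fibre,'' which you rightly flag as the main obstacle but do not resolve. Reattaching a formal open disc $\Spf\mathcal{O}_K[[t]]$ at a puncture is not a legitimate gluing of formal schemes: the residue disc $\mathrm{Red}_{U_i^{\mathrm{u}}}^{-1}(s)$ is \emph{disjoint} from $U(S_i)$, so the two pieces share no open formal subscheme and could only be identified along a closed point of the special fibre, which is not an open immersion. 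The same problem occurs at the outer ends; moreover your candidate $\Spf\mathcal{O}_K\langle u\rangle[[v]]/(uv-\pi^m)$ has generic fibre the semi-open annulus $A_K(|\pi|^m,1]$ containing its outer circle, not the open annulus $A_K(r,1)$ required by the definition of a basic wide open pair, and an open annulus only arises as the generic fibre of a completion at a closed point---which brings back exactly the gluing problem you are trying to avoid.

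The paper's proof handles all of this with a single device. It builds a formal model $\mathscr{W}'$ of the \emph{enlarged} space $W'$ using the unpunctured charts $U_i^{\mathrm{u}}$ for $i\in\widetilde{I}$ together with the $X_j(S_{i_1},S_{i_2})$, so the residue discs over the $S_i$ are never removed; it then defines $\mathscr{W}$ as the formal completion of $\mathscr{W}'$ along the closed subscheme $Y$ determined by $\bigcup_{i\in I}\overline{U}_i^{\mathrm{u},\mathrm{c}}$. The generic fibre of this completion is the open tube of $Y$ in $W'$, which is precisely $\bigcup_{i\in I}U_i^{\mathrm{u}}\cup\bigcup_j V_j=W$: the discs you wanted to reattach reappear as tubes of the points of $S_i$, and the open annuli at the ends appear as tubes of the nodes joining $Y$ to the discarded cap components. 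If you prefer a pure gluing construction, the correct charts are those of Proposition \ref{patch}, namely $\Spf A^{\circ}(U_i^{\mathrm{u}})$, $\Spf A^{\circ}\bigl(X_j(S_{i_1},S_{i_2})\bigr)$ and $\Spf A^{\circ}\bigl(X_{i_2}'(S_{i_1})\bigr)$, glued along the genuine common opens $\Spf A^{\circ}(U(S_i))$; identifying that object with the completion uses \cite[Theorem 7.4.1]{deJcryrigid}. You should also record, as the paper does, that $\mathscr{W}$ is independent of the choice of the sets $S_i$.
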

\begin{proof}
We take $S_i$ for $i \in \widetilde{I}$ 
as in Proposition \ref{compati}. 
Then 
$\{ U_i ^{\mathrm{u}} \}_{i \in \widetilde{I}} \cup 
 \{ X_j (S_{i_1} ,S_{i_2} )
 \}_{i_1 ,i_2 \in \widetilde{I} ,j \in J_{i_1 ,i_2} }$ 
is a pure affinoid covering of $W'$ in the sense of 
\cite[Definition 4.8.3]{FvdP} by 
Lemma \ref{compati}. 
This covering gives a formal model 
$\mathscr{W}'$ of $W'$. 
The closed fiber $\mathscr{W}' _k$ is 
a semi-stable curve by Lemma \ref{sst}. 

Let $Y$ be the reduced closed subscheme of 
$\mathscr{W}' _k$ determined by 
$\bigcup_{i \in I} \overline{U_i ^{\mathrm{u}}}^{\mathrm{c}}$. 
We define $\mathscr{W}$ as 
the formal completion of $\mathscr{W}'$ 
along $Y$. 
Then $\mathscr{W}$ is a formal model 
of $W$. 
We can check that $\mathscr{W}$ is 
independent of a choice of 
$S_i$ for $i \in \widetilde{I}$. 
\end{proof}

We use the notation in the proof of 
Theorem \ref{const}. 
The closed fiber $\mathscr{W}_k$ 
of $\mathscr{W}$ is $Y$. 
The irreducible components of the 
geometric closed fiber 
$\mathscr{W'}_{\overline{k}}$ of $\mathscr{W'}$ 
consists of proper curves that are 
also irreducible components of 
$Y_{\overline{k}}$ 
and affine lines over 
$\overline{k}$. 

\begin{prop}\label{patch}
For $i \in I$ and $j \in J_i$, 
we put 
$X_j ' (S_i )=U(S_i ) \cup V_j$. 
Then $\mathscr{W}$ is obtained also by patching 
$\Spf A^{\circ} (U_i ^{\mathrm{u}} )$ 
for $i \in I$, 
$\Spf A^{\circ} \bigl( X_j (S_{i_1} ,S_{i_2} ) \bigr)$ 
for 
$i_1 ,i_2 \in I$ and $j \in J_{i_1 ,i_2}$, 
and $\Spf A^{\circ} \bigl( X_j ' (S_i ) \bigr)$ 
for $i \in I$ and $j \in J_i$. 
\end{prop}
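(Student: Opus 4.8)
The plan is to unwind the construction of $\mathscr{W}$ in the proof of Theorem \ref{const} as the formal completion of $\mathscr{W}'$ along $Y$, and to trace the effect of this completion on each member of the pure affinoid covering $\{ U_i^{\mathrm{u}} \}_{i \in \widetilde{I}} \cup \{ X_j(S_{i_1},S_{i_2}) \}$. Since the formation of the formal completion along the closed subscheme $Y$ of $\mathscr{W}'_k$ is local on this covering, $\mathscr{W}$ is obtained by gluing, with gluing data inherited from $\mathscr{W}'$, the completions along $Y$ of the affine pieces $\Spf A^{\circ}(U_i^{\mathrm{u}})$ and $\Spf A^{\circ}(X_j(S_{i_1},S_{i_2}))$. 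On the generic fibre, completing a piece $Z$ along $Y$ amounts to passing to the tube over $Y \cap \overline{Z}$ inside $Z$, so the main task is to compute each of these tubes.

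First I would dispose of the pieces whose reduction already lies in $Y$. For $i \in I$, the reduction $\overline{U_i^{\mathrm{u}}}$ is a dense open subscheme of the proper curve $\overline{U}_i^{\mathrm{u},\mathrm{c}}$, which is by definition one of the irreducible components of $Y$; hence $Y \cap \overline{U_i^{\mathrm{u}}} = \overline{U_i^{\mathrm{u}}}$ and the completion along $Y$ does not change $\Spf A^{\circ}(U_i^{\mathrm{u}})$. Likewise, for $i_1, i_2 \in I$ and $j \in J_{i_1,i_2}$, the reduction of $X_j(S_{i_1},S_{i_2})$ is $\mathrm{Im}(\overline{j}_{U(S_{i_1})}) \cup \mathrm{Im}(\overline{j}_{U(S_{i_2})}) \cup y_j$ by the proof of Proposition \ref{compati}, and all three pieces lie on components of $Y$; so this piece is again unchanged. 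These account for the first two families in the asserted patching.

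Next I would treat the pieces meeting the non-proper locus of $\mathscr{W}'_{\overline{k}}$. Recall that the components of $\mathscr{W}'_{\overline{k}}$ not contained in $Y$ are the affine lines $\Spec \overline{k}[X]$ arising from the indices $i \in \widetilde{I} \setminus I$. For such $i$ the vertex piece $\Spf A^{\circ}(U_i^{\mathrm{u}})$ reduces into one of these affine lines, so its completion along $Y$ is supported over the nodes alone and is already contained in the edge pieces treated below; these vertex pieces may therefore be deleted from the covering. For the remaining edge pieces, take $i_1 \in I$ and $i_2 \in J_{i_1}$, so that $J_{i_1,i_2} = \{ i_2 \}$ and $X_{i_2}(S_{i_1},S_{i_2}) = U(S_{i_1}) \cup U(S_{i_2}) \cup V_{i_2}$. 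Its reduction is $\mathrm{Im}(\overline{j}_{U(S_{i_1})}) \cup y_{i_2} \cup \mathrm{Im}(\overline{j}_{U(S_{i_2})})$, where the last piece lies on an affine line not in $Y$; thus $Y$ meets this reduction in $\mathrm{Im}(\overline{j}_{U(S_{i_1})}) \cup y_{i_2}$, whose tube is $U(S_{i_1}) \cup V_{i_2} = X_{i_2}'(S_{i_1})$. This is an affinoid by \cite[Lemma 4.8.1]{FvdP}, and its canonical formal model is $\Spf A^{\circ}(X_{i_2}'(S_{i_1}))$. Hence the completion of the edge piece $\Spf A^{\circ}(X_{i_2}(S_{i_1},S_{i_2}))$ along $Y$ is $\Spf A^{\circ}(X_{i_2}'(S_{i_1}))$, giving the third family.

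Putting the three cases together yields exactly the claimed patching, with gluing morphisms the restrictions of those of $\mathscr{W}'$. I expect the main obstacle to be the last identification: that completing $\Spf A^{\circ}(X_{i_2}(S_{i_1},S_{i_2}))$ along $Y$ produces precisely the canonical formal model of the sub-affinoid $X_{i_2}'(S_{i_1})$ obtained by discarding the open part lying over $\mathrm{Im}(\overline{j}_{U(S_{i_2})})$. For this I would verify that deleting the affine-line part of the reduction corresponds, on the generic fibre, to removing $U(S_{i_2})$, using the compatibility of reductions established in Proposition \ref{compati} together with \cite[Lemma 4.8.1]{FvdP}.
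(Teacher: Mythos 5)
Your overall strategy is the right one and is in fact the paper's: the paper's entire proof is the citation of \cite[Theorem 7.4.1]{deJcryrigid}, i.e.\ the compatibility of Berthelot's generic-fibre functor with formal completion along closed subschemes of the special fibre, applied piece by piece to the pure covering from Theorem \ref{const}. Your treatment of the first two families is correct, and your bookkeeping of which parts of $\mathscr{W}'_{\overline{k}}$ lie in $Y$ is essentially right (one small slip: for $i\in\widetilde{I}\setminus I$ the reduction of the circle $U_i^{\mathrm{u}}=C_i$ is the $\mathbb{G}_m$ inside the affine-line component and misses the node entirely, so its completion along $Y$ is empty rather than ``supported over the nodes'').

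The genuine gap is at the step you yourself flag as the main obstacle, and the tool you propose for it would fail. The subset $\mathrm{Im}(\overline{j}_{U(S_{i_1})})\cup\{y_{i_2}\}$ is \emph{closed} in the reduction of the edge piece, not open: $y_{i_2}$ lies in the closure of $\mathrm{Im}(\overline{j}_{U(S_{i_2})})$, since the latter is dense in the affine-line component. Hence \cite[Lemma 4.8.1]{FvdP}, which concerns preimages of \emph{open} affine subsets of the canonical reduction, does not apply, and indeed its conclusion is false here: $X'_{i_2}(S_{i_1})=U(S_{i_1})\cup V_{i_2}$ is \emph{not} an affinoid. It is an affinoid with an open annulus end attached; in the local model at the node, $\Spf\mathcal{O}_K\langle S,T\rangle/(ST-c)$ completed along the closed component $\{T=0\}$ has as tube the semi-open annulus $\{|c|<|S|\le 1\}$, which is not affinoid (compare $B_K[r]\cup A_K(r,1)=B_K(1)$). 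Consequently $\Spf A^{\circ}\bigl(X'_{i_2}(S_{i_1})\bigr)$ is not a topologically-of-finite-type formal affinoid model but only a formal scheme formally of finite type (its algebra involves a power-series variable at the annulus end), and the identification of the completion of $\Spf A^{\circ}\bigl(X_{i_2}(S_{i_1},S_{i_2})\bigr)$ along $Y$ with $\Spf A^{\circ}\bigl(X'_{i_2}(S_{i_1})\bigr)$ cannot be reduced to the theory of pure affinoid coverings. This is precisely the point for which the paper invokes \cite[Theorem 7.4.1]{deJcryrigid}: the generic fibre of the completion of an admissible formal scheme along a closed subscheme of its special fibre is the tube of that subscheme, and the completed ring is the ring of power-bounded functions there. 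Replacing your appeal to \cite[Lemma 4.8.1]{FvdP} at this step by that result (or by the explicit computation in the local model $\mathcal{O}_K\langle S,T\rangle/(ST-c)$ completed at the ideal $(T)$) closes the gap and recovers the paper's argument.
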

\begin{proof}
We need to show that 
the ring of an affine open formal subscheme of 
$\mathscr{W}$ can be recovered as 
the ring of bounded rigid analytic functions on its rigid generic fiber. 
This follows from 
\cite[Theorem 7.4.1]{deJcryrigid}. 
\end{proof}

Let $C$ be the proper smooth curve over $K$ 
obtained from $W$ as in Theorem \ref{cptif}. 
Let $\{ D_{i'} \}_{i' \in I'}$ be the set of 
the connected components of $D=C \setminus W$. 
For $i' \in I'$, there uniquely exist 
$i \in I$ and $j \in J_i$ such that 
the union of $D_{i'}$ and $V_j$ 
defines an open disk in $C$, which is denoted by 
$U_{i'}$. 
Then 
$\mathcal{S}' = \mathcal{S} \cup 
 \{ (U_{i'} ,D_{i'} ) \mid i' \in I' \}$ is 
a semi-stable covering of $C$. 
Let $\mathcal{C}$ be the semi-stable model of $C$ 
associated to the semi-stable covering $\mathcal{S}'$ 
by Theorem \ref{constructsst}. 
Then $Y$ 
is naturally considered as 
a closed subscheme of the special fiber 
$\mathcal{C}_k$ of $\mathcal{C}$. 

\begin{prop}\label{sstformal}
The formal completion of $\mathcal{C}$ 
along $Y$ is naturally 
isomorphic to $\mathscr{W}$. 
\end{prop}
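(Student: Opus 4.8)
Write $\widehat{\mathcal{C}}$ for the formal completion of $\mathcal{C}$ along $Y$. The plan is to show that $\widehat{\mathcal{C}}$ and $\mathscr{W}$ are patched from the same affinoid pieces, and then to read off the isomorphism from Proposition \ref{patch}.

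First I would note that the semi-stable model $\mathcal{C}$ attached to $\mathcal{S}'$ by Theorem \ref{constructsst} is produced by the same patching procedure as in the proof of Theorem \ref{const}: after a choice of $G_k$-stable collections of points on the smooth loci of the reductions, the formal completion of $\mathcal{C}$ along its whole special fiber $\mathcal{C}_k$ is obtained by patching $\Spf A^{\circ}$ of the vertex affinoids $U_i^{\mathrm{u}}$ for $i \in I$, of the disk-vertex affinoids attached to the pairs $(U_{i'},D_{i'})$ for $i' \in I'$, and of the edge affinoids joining adjacent vertices. Under the closed embedding $Y \hookrightarrow \mathcal{C}_k$, the components coming from $i \in I$ are precisely those contained in $Y$, whereas the reductions of the disks $D_{i'}$ lie outside $Y$.

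Since formal completion along a closed subscheme is local on the model, I would then compute $\widehat{\mathcal{C}}$ by patching the completions of these pieces along their intersections with $Y$. The vertex pieces $\Spf A^{\circ}(U_i^{\mathrm{u}})$ for $i \in I$ and the interior edge pieces $\Spf A^{\circ}(X_j(S_{i_1},S_{i_2}))$ for $i_1,i_2 \in I$ lie over $Y$ and are retained unchanged. For a boundary annulus, that is for $i_1 \in I$ and $i_2 \in J_{i_1}$, the corresponding edge affinoid of $\mathcal{C}$ has the shape $U(S_{i_1}) \cup V_{i_2} \cup (\text{disk-vertex cap})$; its completion along $Y$ discards the part lying over the disk component and leaves $\Spf A^{\circ}(U(S_{i_1}) \cup V_{i_2}) = \Spf A^{\circ}(X'_{i_2}(S_{i_1}))$. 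The crucial point is that the common core $U(S_{i_1}) \cup V_{i_2}$ of this edge is intrinsic to $W$, while the cap lies over a component disjoint from $Y$; hence the same computation carried out inside $\mathscr{W}'$ — where $V_{i_2}$ is instead capped by the circle $C_{i_2}$ to form $V_{i_2}^{\mathrm{s}}$ — yields the identical piece $\Spf A^{\circ}(X'_{i_2}(S_{i_1}))$, which is exactly the boundary-edge piece appearing in Proposition \ref{patch}.

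Comparing this list of pieces and overlaps with the patching description of $\mathscr{W}$ in Proposition \ref{patch}, and observing that all gluing identifications are the tautological ones induced by the common ambient curve $W \subset W' \subset C$, I would conclude that $\widehat{\mathcal{C}}$ and $\mathscr{W}$ are canonically isomorphic. The step I expect to be the main obstacle is precisely this boundary-edge computation: one must verify that capping $V_{i_2}$ by the full disk $D_{i'}$ in $\mathcal{C}$ and capping it only by the circle $C_{i_2}$ in $\mathscr{W}'$ give the same formal completion along $Y$. Concretely, this means checking that, at the node where $V_{i_2}$ meets the component $Y(S_{i_1})$, the local structure of $\mathcal{C}$ after completing along the $Y(S_{i_1})$-side does not depend on the cap, so that the difference between the $\mathbb{P}^1$-component coming from the disk and the $\Spec \overline{k}[X]$-component coming from the semi-open annulus lies entirely outside the formal neighborhood of $Y$ and is forgotten upon completion.
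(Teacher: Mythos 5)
Your argument is correct and takes the same route the paper intends: the paper's own proof is the single line ``This follows from the construction,'' and your piece-by-piece comparison of the two patching descriptions supplies exactly the details left implicit. In particular you correctly isolate the one nontrivial point, namely that the completion along $Y$ of the boundary edge piece is insensitive to whether the annulus $V_{i_2}$ is capped by the disk $D_{i'}$ (in $\mathcal{C}$) or by the circle $C_{i_2}$ (in $\mathscr{W}'$), which is settled by the fact that the completion along a locally closed subscheme of the special fiber is determined by its tube, i.e.\ by \cite[Theorem 7.4.1]{deJcryrigid} exactly as in Proposition \ref{patch}.
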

\begin{proof}
This follows from the construction of 
$\mathscr{W}$ in the proof of Theorem \ref{const}. 
\end{proof}

\begin{rem}\label{inducerem}
Proposition \ref{patch} 
and Proposition \ref{sstformal} 
give two different descriptions 
of the same formal model $\mathscr{W}$, 
and the both descriptions are important. 
The construction in Proposition \ref{patch} 
implies that 
a finite flat morphism between 
wide open rigid curves 
which is compatible with their semi-stable 
coverings naturally extends to 
a morphism between their semi-stable formal models. 
This fact is very non-trivial from the construction 
in Proposition \ref{sstformal}, 
because such a morphism does not extend to a morphism 
between their compactifications in general. 
On the other hand, 
by Proposition \ref{sstformal}, 
we see that $W$ satisfies the condition of 
\cite[Proposition 5.9.4]{FaCohLLC}. 
\end{rem}

\section{Cohomology of rigid curve}
We put 
$\mathscr{W}_{\mathcal{O}_{\mathbf{C}}} =
 \mathscr{W} \widehat{\otimes}_{\mathcal{O}_K} 
 \mathcal{O}_{\mathbf{C}}$. 
The formal nearby cycle functor 
$R\varPsi_{\mathscr{W}_{\mathcal{O}_{\mathbf{C}}}}$ of 
$\mathscr{W}_{\mathcal{O}_{\mathbf{C}}}$ is defined 
in \cite[section 2]{BeVanII}. 

Let $\Gamma_{\mathscr{W}}$ 
and $\Gamma_{\mathscr{W}'}$ be the 
dual graphs of 
$\mathscr{W}_{\overline{k}}$ and 
$\mathscr{W}'_{\overline{k}}$ respectively 
(cf. \cite[Definition 10.3.17]{LiuAlg} ). 
Then $\Gamma_{\mathscr{W}}$ 
is a subgraph of 
$\Gamma_{\mathscr{W}'}$. 
For $v \in \mathcal{V}(\Gamma_{\mathscr{W}'} )$, 
let $Y_v$ be the irreducible component of 
$\mathscr{W}'_{\overline{k}}$ 
corresponding to $v$, 
let $\widetilde{Y}_v$ be the normalization of 
$Y_v$, and let 
$\pi_v \colon \widetilde{Y}_v \to \mathscr{W}'_{\overline{k}}$ 
be a natural morphism. 

\begin{prop}\label{nearby} 
Let $\Lambda$ be a torsion local finite 
$\mathbb{Z}_{\ell}$-algebra. 
Then there are canonical isomorphisms 
\begin{align}
 R^0 \varPsi_{\mathscr{W}_{\mathcal{O}_{\mathbf{C}}}} 
 \Lambda 
 &\cong \Lambda, \label{R^0} \\ 
 R^1 \varPsi_{\mathscr{W}_{\mathcal{O}_{\mathbf{C}}}} 
 \Lambda 
 &\cong \biggl( \Bigl( 
 \bigoplus_{v \in \mathcal{V}(\Gamma_{\mathscr{W}'} )} 
 {\pi_v }_* \Lambda 
 \Bigr) \Big/ \Lambda \biggr)^* (-1) \label{R^1}, 
\end{align}
where we consider the right hand side of \eqref{R^1} 
as a sheaf on $\mathscr{W}_{\overline{k}}$. 
\end{prop}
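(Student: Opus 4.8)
The plan is to compute the formal nearby cycle sheaves $R^q\varPsi_{\mathscr{W}_{\mathcal{O}_{\mathbf{C}}}}\Lambda$ by reducing to a local, combinatorial computation on the semi-stable closed fiber $Y_{\overline{k}}$. Since $\mathscr{W}_{\mathcal{O}_{\mathbf{C}}}$ is a semi-stable formal scheme (Theorem \ref{const}), the nearby cycle complex is controlled \'etale-locally, and the key input is the known local structure of nearby cycles for a semi-stable curve, as developed in \cite{BeVanII}. First I would stratify $Y_{\overline{k}}$ into its smooth locus and its set of ordinary double points (nodes), the latter corresponding to the edges of $\Gamma_{\mathscr{W}'}$; by construction, each node is the reduction of a circle coming from an annulus $V_j$ or $V_j^{\mathrm{s}}$, so \'etale-locally $\mathscr{W}_{\mathcal{O}_{\mathbf{C}}}$ looks like $\Spf\mathcal{O}_{\mathbf{C}}\langle x,y\rangle/(xy-\varpi)$ near a node and like a smooth formal disk elsewhere.

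Next I would record the two standard local computations. On the smooth locus the nearby cycles are concentrated in degree $0$ and equal $\Lambda$, giving $R^0\varPsi\,\Lambda\cong\Lambda$ after checking that the global sections glue to the constant sheaf; this establishes \eqref{R^0}. At a node the local model $xy=\varpi$ has $R^1\varPsi\,\Lambda$ supported on the node and canonically isomorphic to $\Lambda(-1)$, with the Tate twist coming from the tame character acting on the two branches. The precise formulation I would use is that $R^1\varPsi_{\mathscr{W}_{\mathcal{O}_{\mathbf{C}}}}\Lambda$ is supported on the singular points and that, globally, it is computed by the cokernel of the pullback map from functions on the components to functions on the normalizations. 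Concretely, dualizing the exact sequence relating $\Lambda$ on $\mathscr{W}'_{\overline{k}}$ to $\bigoplus_{v}{\pi_v}_*\Lambda$ (the normalization sequence for the semi-stable curve, whose connecting data is exactly the set of nodes) yields the sheaf
\begin{equation*}
 \biggl(\Bigl(\bigoplus_{v\in\mathcal{V}(\Gamma_{\mathscr{W}'})}{\pi_v}_*\Lambda\Bigr)\Big/\Lambda\biggr)^{*}(-1),
\end{equation*}
which is the desired \eqref{R^1}.

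To make this rigorous I would compare the local pieces via the \'etale-local structure and the proper base change / excision properties of $R\varPsi$ for formal semi-stable schemes established in \cite{BeVanII}, then verify that the local isomorphisms at nodes are compatible with the chosen orientations so that they patch to a single global isomorphism of sheaves on $Y_{\overline{k}}$. The identification of the Tate twist $(-1)$ and the appearance of the dual are the conceptual content: the node contributes a class in $H^1$ of a punctured disk (hence a copy of $\Lambda(-1)$), and summing over the branches meeting each component produces exactly the quotient $\bigl(\bigoplus_v{\pi_v}_*\Lambda\bigr)/\Lambda$, where the distinguished $\Lambda$ is the diagonal coming from the connectedness of $Y_{\overline{k}}$.

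The main obstacle I anticipate is the careful bookkeeping of signs and Tate twists when gluing the purely local computations at the nodes into a globally well-defined isomorphism of sheaves, and in particular checking that the map whose cokernel appears is precisely the restriction-to-normalization map $\Lambda\to\bigoplus_v{\pi_v}_*\Lambda$ rather than some twist of it. A secondary technical point is ensuring that the affine components of $\mathscr{W}'_{\overline{k}}$ isomorphic to $\Spec\overline{k}[X]$ (noted after Theorem \ref{const}) contribute correctly and do not disturb the degree-$0$ computation; here I would use that such components are contractible and that the relevant cohomology is captured entirely by $\Gamma_{\mathscr{W}'}$ together with the proper components indexed by $\mathcal{V}(\Gamma_{\mathscr{W}'})$.
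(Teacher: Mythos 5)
Your proposal is correct and follows essentially the same route as the paper: reduce to the local model $ST=c$ at each node, compute $R^1\varPsi\Lambda$ there as $H^1(\mathbb{G}_{m,\overline{K}},\Lambda)\cong\bigl(({i_0}_*\Lambda\oplus {i_\infty}_*\Lambda)/\Lambda\bigr)^*(-1)$ with the two points matching the two branches through the node, and assemble these into the dual of the normalization sequence. The only (minor) difference is that the paper first passes to the algebraic semi-stable model $\mathcal{C}$ via Proposition \ref{sstformal} and Berkovich's comparison theorem before citing SGA 7 XV 2.2.3 for the local computation, whereas you propose to work \'etale-locally on the formal scheme directly.
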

\begin{proof}
The isomorphism 
\eqref{R^0} follows from the definition. 
By \cite[Theorem 3.1]{BeVanII}, 
we have an isomorphism 
\[
 R^1 \varPsi_{\mathscr{W}_{\mathcal{O}_{\mathbf{C}}}} 
 \Lambda \cong 
 (R^1 \varPsi_{\mathcal{C}_{\mathcal{O}_{\overline{K}}}} 
 \Lambda)|_{Y_{\overline{k}}}. 
\]
Hence it suffices to prove 
the isomorphism \eqref{R^1} 
locally at singular points of $\mathscr{W}'_{\overline{k}}$. 

Let $x_e$ be the singular point of 
$\mathscr{W}'_{\overline{k}}$ corresponding to 
$e \in \mathcal{E}(\Gamma_{\mathscr{W}'} )$. 
Then the formal completion $\mathscr{W}_e$ of 
$\mathcal{C}_{\mathcal{O}_{\overline{K}}}$ at $x_e$ 
is isomorphic to 
$\Spf \mathcal{O}_{\mathbf{C}} [[S,T]] /(ST-c)$ 
for some $c \neq 0 \in \mathfrak{m}_K$. 
Further, $\Spf \mathcal{O}_{\mathbf{C}} [[S,T]] /(ST-c)$ 
is isomorphic to the formal completion of 
$\mathcal{X} =\Spec \mathcal{O}_{\overline{K}} [S,T] /(ST-c)$ 
at the point $x_0$ of the special fiber 
defined by $S=T=0$. 
Note that 
$\mathcal{X}_{\overline{K}} \cong \mathbb{G}_{m,\overline{K}}$. 
Then we have isomorphisms 
\[
 (R^1 \varPsi_{\mathcal{C}_{\mathcal{O}_{\overline{K}}}} 
 \Lambda)|_{x_e} \cong 
 R^1 \varPsi_{\mathscr{W}_e} 
 \Lambda \cong 
 (R^1 \varPsi_{\mathcal{X}} 
 \Lambda)|_{x_0} \cong 
 H^1 (\mathbb{G}_{m,\overline{K}} ,\Lambda)
\]
by \cite[Theorem 3.1]{BeVanII} and 
\cite[Expos\'{e} XV Proposition 2.2.3]{SGA7II}. 
Let $i_0$ and $i_{\infty}$ be the 
closed immersions of the zero point and 
the infinity point 
into $\mathbb{P}^1$ respectively. 
Then we have 
\begin{align*}
 H^1 (\mathbb{G}_{m,\overline{K}} ,\Lambda) 
 &\cong 
 H^1 _{\mathrm{c}} (\mathbb{G}_{m,\overline{K}} ,\Lambda)^* (-1), \\ 
 H^1 _{\mathrm{c}} (\mathbb{G}_{m,\overline{K}} ,\Lambda) 
 &\cong 
 H^0 \bigl( \mathbb{P}^1_{\overline{K}} , 
 ({i_{0}}_* \Lambda \oplus {i_{\infty}}_* \Lambda )/\Lambda \bigr). 
\end{align*} 
Hence the claim follows, 
because the zero point and the infinity point 
correspond to the irreducible components passing 
$x_e$. 
\end{proof}

We put 
\[
 H^i (\mathscr{W}_{\overline{k}} , 
 R^j \varPsi_{\mathscr{W}_{\mathcal{O}_{\mathbf{C}}}}
 \mathbb{Q}_{\ell} )= 
 \biggl( \varprojlim_{N \in \mathbb{N} } 
 H^i \bigl( \mathscr{W}_{\overline{k}} , 
 R^j \varPsi_{\mathscr{W}_{\mathcal{O}_{\mathbf{C}}}}
 (\mathbb{Z} / \ell^N \mathbb{Z} ) \bigr) \biggr) 
 \otimes_{\mathbb{Z}_{\ell}} \mathbb{Q}_{\ell}. 
\] 
We consider $W_{\mathbf{C}}$ as 
a Berkovich space. 
Then we have a spectral sequence 
\[
 E_2^{i,j} =H^i (\mathscr{W}_{\overline{k}} , 
 R^j \varPsi_{\mathscr{W}_{\mathcal{O}_{\mathbf{C}}}}
 \mathbb{Q}_{\ell} )
 \Rightarrow 
 H^{i+j} (W_{\mathbf{C}}, \mathbb{Q}_{\ell} )
\]
by \cite[Proposition 5.9.4]{FaCohLLC}, 
because $W$ satisfies the condition in 
\cite[Proposition 5.9.4]{FaCohLLC} by 
Proposition \ref{sstformal}. 
This spectral sequence 
gives an exact sequence 
\begin{equation}\label{longN}
 0 \longrightarrow 
 H^1 (\mathscr{W}_{\overline{k}}, 
 \mathbb{Q}_{\ell} ) 
 \longrightarrow 
 H^1 (W_{\mathbf{C}} , \mathbb{Q}_{\ell}  ) 
 \longrightarrow 
 H^0 ( \mathscr{W}_{\overline{k}}, 
 R^1 \varPsi_{\mathscr{W}_{\mathcal{O}_{\mathbf{C}}}} 
 \mathbb{Q}_{\ell} ) \longrightarrow 
 H^2 ( \mathscr{W}_{\overline{k}}, 
 \mathbb{Q}_{\ell} ) 
\end{equation} 
by \eqref{R^0}. 

Let $\widetilde{\Gamma}_{\mathscr{W}}$ be the graph 
obtained by 
adding one new vertex to $\Gamma_{\mathscr{W}'}$ and 
joining the new vertex with all vertices in 
$\mathcal{V}(\Gamma_{\mathscr{W}'}) \setminus 
 \mathcal{V}(\Gamma_{\mathscr{W}})$. 

\begin{lem}\label{HandN}
There is a canonical isomorphism 
\[ 
 h \colon H_1 (\widetilde{\Gamma}_{\mathscr{W}} , 
 \mathbb{Q}_{\ell}) (-1) 
 \stackrel{\sim}{\longrightarrow} 
 \Ker \Bigl(  H^0 ( \mathscr{W}_{\overline{k}}, 
 R^1 \varPsi_{\mathscr{W}_{\mathcal{O}_{\mathbf{C}}}} 
 \mathbb{Q}_{\ell} ) \longrightarrow 
 H^2 (\mathscr{W}_{\overline{k}}, 
 \mathbb{Q}_{\ell} ) \Bigr) . 
\]
\end{lem}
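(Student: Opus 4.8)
The plan is to make the boundary map
\[
 H^0(\mathscr{W}_{\overline{k}},R^1\varPsi_{\mathscr{W}_{\mathcal{O}_{\mathbf{C}}}}\mathbb{Q}_{\ell})
 \longrightarrow H^2(\mathscr{W}_{\overline{k}},\mathbb{Q}_{\ell})
\]
of \eqref{longN} completely explicit in graph-theoretic terms, and then to match its kernel with $H_1(\widetilde{\Gamma}_{\mathscr{W}},\mathbb{Q}_{\ell})(-1)$. Write $\Gamma=\Gamma_{\mathscr{W}}$, $\Gamma'=\Gamma_{\mathscr{W}'}$, and let $A=\mathcal{V}(\Gamma')\setminus\mathcal{V}(\Gamma)$ be the set of vertices corresponding to the affine components $\Spec\overline{k}[X]$ of $\mathscr{W}'_{\overline{k}}$; each $a\in A$ is joined to $\Gamma$ by a single pendant edge $p_a$. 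By Proposition \ref{nearby} the sheaf $R^1\varPsi_{\mathscr{W}_{\mathcal{O}_{\mathbf{C}}}}\mathbb{Q}_{\ell}$ is concentrated at the points $x_e$ for $e\in\mathcal{E}(\Gamma')$ with stalk $\mathbb{Q}_{\ell}(-1)$; since this stalk is $\bigl((\Lambda_{s(e)}\oplus\Lambda_{t(e)})/\Lambda\bigr)^*(-1)$ and reversing the orientation of $e$ exchanges the two summands and hence acts by $-1$, we obtain a canonical isomorphism $H^0(\mathscr{W}_{\overline{k}},R^1\varPsi_{\mathscr{W}_{\mathcal{O}_{\mathbf{C}}}}\mathbb{Q}_{\ell})\cong E(\Gamma',\mathbb{Q}_{\ell})(-1)$ compatible with the relation $e=-\overline{e}$. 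As $\mathscr{W}_{\overline{k}}=Y_{\overline{k}}$ is a proper curve, reading off the degree on each irreducible component identifies $H^2(\mathscr{W}_{\overline{k}},\mathbb{Q}_{\ell})\cong V(\Gamma,\mathbb{Q}_{\ell})(-1)$.

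Next I would show that, under these identifications, the boundary map of \eqref{longN} becomes $\mathrm{pr}\circ d$, where $d\colon E(\Gamma',\mathbb{Q}_{\ell})\to V(\Gamma',\mathbb{Q}_{\ell})$ is the boundary map of Section 2 and $\mathrm{pr}\colon V(\Gamma',\mathbb{Q}_{\ell})\to V(\Gamma,\mathbb{Q}_{\ell})$ kills the basis vectors in $A$. By Proposition \ref{sstformal} and the isomorphism $R^1\varPsi_{\mathscr{W}_{\mathcal{O}_{\mathbf{C}}}}\mathbb{Q}_{\ell}\cong(R^1\varPsi_{\mathcal{C}_{\mathcal{O}_{\overline{K}}}}\mathbb{Q}_{\ell})|_{Y_{\overline{k}}}$ of Proposition \ref{nearby}, restriction along $Y_{\overline{k}}\hookrightarrow\mathcal{C}_{\overline{k}}$ yields a morphism from the nearby-cycle spectral sequence of the proper semi-stable curve $\mathcal{C}$ to that of $\mathscr{W}$. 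This restriction is an isomorphism on the terms $H^0(-,R^1\varPsi)$, because every node of $\mathcal{C}_{\overline{k}}$ lies on $Y_{\overline{k}}$, and on the terms $H^2$ it is the projection $\mathrm{pr}$, because the component-degree of $[\mathbb{P}^1_a]$ along any component of $Y$ vanishes. For the proper curve $\mathcal{C}$, whose dual graph is $\Gamma'$, the corresponding boundary map is the full graph boundary $d$: this is precisely the local picture at a node found in the proof of Proposition \ref{nearby}, where the two branches $\{S=0\}$ and $\{T=0\}$ contribute the classes of the two components through $x_e$ with opposite signs. Hence the boundary map of \eqref{longN} is $\mathrm{pr}\circ d$.

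It then remains to prove the combinatorial identity
\[
 H_1(\widetilde{\Gamma}_{\mathscr{W}},\mathbb{Q}_{\ell})
 \cong \Ker\bigl(\mathrm{pr}\circ d\colon
 E(\Gamma',\mathbb{Q}_{\ell})\to V(\Gamma,\mathbb{Q}_{\ell})\bigr).
\]
Writing a chain on $\widetilde{\Gamma}_{\mathscr{W}}$ as $\xi=\sum_{e\in\mathcal{E}(\Gamma')}c_e e+\sum_{a\in A}b_a q_a$, where $q_a$ joins the new vertex $*$ to $a$, I would compute $d\xi$ vertex by vertex. Vanishing at the vertices of $\Gamma$ says precisely that $\sum_e c_e e\in\Ker(\mathrm{pr}\circ d)$; vanishing at each $a\in A$ forces $b_a=-c_{p_a}$; and the component at $*$ is then automatically zero, for once the $\Gamma$-components of $d(\sum_e c_e e)$ vanish, applying the augmentation $\epsilon\colon V(\Gamma',\mathbb{Q}_{\ell})\to\mathbb{Q}_{\ell}$ and using $\epsilon\circ d=0$ gives $\sum_a c_{p_a}=0$. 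Thus $\xi\mapsto\sum_e c_e e$ is an isomorphism from $H_1(\widetilde{\Gamma}_{\mathscr{W}},\mathbb{Q}_{\ell})=\Ker(d)$ onto $\Ker(\mathrm{pr}\circ d)$, and combining this with the two identifications of the first paragraph and the $(-1)$-twist produces the desired isomorphism $h$.

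The main obstacle is the middle step: pinning the boundary map down as $\mathrm{pr}\circ d$ with consistent signs. The delicate feature is the asymmetry created by the affine components: their nodes do contribute to $H^0(R^1\varPsi)$, since they lie on $Y_{\overline{k}}$, but the components themselves do not contribute to $H^2(\mathscr{W}_{\overline{k}})$, so a pendant edge $p_a$ maps to $\pm$ the class of its $\Gamma$-endpoint alone, rather than to a difference of two component classes. Matching the orientation convention used in the stalk identification of the first paragraph with the sign in $d(e)=t(e)-s(e)$, and checking the naturality of the restriction of spectral sequences along $Y_{\overline{k}}\hookrightarrow\mathcal{C}_{\overline{k}}$, is where the care is needed.
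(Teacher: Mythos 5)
Your proposal is correct and follows essentially the same route as the paper: the paper defines the same map $h$ by pairing a cycle $r$ against the edge classes $c_e$ in the stalks of $R^1\varPsi$, i.e.\ $h(r)(c_e)=\langle r,e\rangle$, which under your identification $H^0(\mathscr{W}_{\overline{k}},R^1\varPsi_{\mathscr{W}_{\mathcal{O}_{\mathbf{C}}}}\mathbb{Q}_{\ell})\cong E(\Gamma_{\mathscr{W}'},\mathbb{Q}_{\ell})(-1)$ is exactly your map $\xi\mapsto\sum_e c_e e$. The paper then simply asserts that ``we can easily check that $h$ gives an isomorphism,'' so your computation of the boundary map as $\mathrm{pr}\circ d$ via restriction from the spectral sequence of the compactification $\mathcal{C}$, together with the combinatorial identification of $\Ker(\mathrm{pr}\circ d)$ with $H_1(\widetilde{\Gamma}_{\mathscr{W}},\mathbb{Q}_{\ell})$, is precisely the verification the paper omits.
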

\begin{proof}
By the isomorphism \eqref{R^1}, 
we have a canonical isomorphism 
\begin{equation}\label{cohR^1}
 H^0 ( \mathscr{W}_{\overline{k}}, 
 R^1 \varPsi_{\mathscr{W}_{\mathcal{O}_{\mathbf{C}}}} 
 \mathbb{Q}_{\ell} ) \cong 
 H^0 \biggl( \mathscr{W}_{\overline{k}}, 
 \Bigl( \bigoplus_{v \in \mathcal{V}(\Gamma_{\mathscr{W}'} )} 
 {\pi_v }_* \mathbb{Q}_{\ell} 
 \Bigr) \Big/ \mathbb{Q}_{\ell} \biggr)^* (-1). 
\end{equation}
Under the identification by \eqref{cohR^1}, 
we explain the definition of $h$. 
For $e \in \mathcal{E}(\Gamma_{\mathscr{W}'})$, 
let $P_e$ be the point of 
$Y_{s(e)} \cap Y_{t(e)}$ that 
corresponds to $e$, 
and define $c_e$ as an element
\[
 (0,1) \in 
 \bigl( ( {\pi_{s(e)}}_* \mathbb{Q}_{\ell} 
 \oplus 
 {\pi_{t(e)}}_* \mathbb{Q}_{\ell} 
 ) / \mathbb{Q}_{\ell} \bigr)_{P_e} 
 \subset 
 H^0 \biggl( \mathscr{W}_{\overline{k}}, 
 \Bigl( \bigoplus_{v \in \mathcal{V}(\Gamma_{\mathscr{W}'} )} 
 {\pi_v }_* \mathbb{Q}_{\ell} 
 \Bigr) \Big/ \mathbb{Q}_{\ell} \biggr). 
\]
Then we define 
a $\mathbb{Q}_{\ell}$-linear map 
$h$ by 
\[
 h(r)(c_e )=\langle r, e\rangle
\]
for $r \in H_1 (\widetilde{\Gamma}_{\mathscr{W}} , 
 \mathbb{Q}_{\ell}) 
 \subset E (\widetilde{\Gamma}_{\mathscr{W}} , 
 \mathbb{Q}_{\ell})$ and 
$e \in \mathcal{E}(\Gamma_{\mathscr{W}'})$, 
where $e$ is considered as an element of 
$E (\Gamma_{\mathscr{W}'} , 
 \mathbb{Q}_{\ell}) 
 \subset E (\widetilde{\Gamma}_{\mathscr{W}} , 
 \mathbb{Q}_{\ell})$. 
Under the identification by \eqref{cohR^1}, 
the last map in \eqref{longN} is $(-1)$-twist of 
the dual of the natural map 
\[
 \bigoplus_{v \in \mathcal{V}(\Gamma_{\mathscr{W}} )} 
 \mathbb{Q}_{\ell} \longrightarrow 
 H^0 \biggl( \mathscr{W}_{\overline{k}}, 
 \Bigl( \bigoplus_{v \in \mathcal{V}(\Gamma_{\mathscr{W}'} )} 
 {\pi_v }_* \mathbb{Q}_{\ell} 
 \Bigr) \Big/ \mathbb{Q}_{\ell} \biggr). 
\]
Using this description, 
we can easily check that $h$ gives an isomorphism. 
\end{proof}

\begin{prop}\label{cohomology}
We have two exact sequences 
\begin{align*}
 &0 \longrightarrow 
 H^1 (\mathscr{W}_{\overline{k}} , 
 \mathbb{Q}_{\ell} ) \longrightarrow 
 H^1 (W_{\mathbf{C}}, \mathbb{Q}_{\ell} ) \longrightarrow 
 H_1 (\widetilde{\Gamma}_{\mathscr{W}} , 
 \mathbb{Q}_{\ell}) (-1) \longrightarrow 0, \\
 &0 \longrightarrow 
 H^1 (\Gamma_{\mathscr{W}} , 
 \mathbb{Q}_{\ell}) \longrightarrow 
 H^1 (\mathscr{W}_{\overline{k}} , 
 \mathbb{Q}_{\ell} ) \longrightarrow 
 \bigoplus_{v \in 
 \mathcal{V}(\Gamma_{\mathscr{W}} )} 
 H^1 (\widetilde{Y}_v , 
 \mathbb{Q}_{\ell})
 \longrightarrow 0. 
\end{align*}
\end{prop}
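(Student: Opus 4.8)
The plan is to obtain the two sequences from two different inputs: the first from the nearby-cycle sequence \eqref{longN} combined with Lemma \ref{HandN}, and the second from the normalization of the nodal curve $\mathscr{W}_{\overline{k}} =Y_{\overline{k}}$.

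For the first sequence, I would start from the four-term exact sequence \eqref{longN}. By exactness the map
$H^1 (\mathscr{W}_{\overline{k}}, \mathbb{Q}_{\ell} ) \to H^1 (W_{\mathbf{C}} , \mathbb{Q}_{\ell})$
is injective, and the image of
$H^1 (W_{\mathbf{C}} , \mathbb{Q}_{\ell}) \to H^0 (\mathscr{W}_{\overline{k}}, R^1 \varPsi_{\mathscr{W}_{\mathcal{O}_{\mathbf{C}}}} \mathbb{Q}_{\ell})$
is precisely
$\Ker \bigl( H^0 (\mathscr{W}_{\overline{k}}, R^1 \varPsi_{\mathscr{W}_{\mathcal{O}_{\mathbf{C}}}} \mathbb{Q}_{\ell}) \to H^2 (\mathscr{W}_{\overline{k}}, \mathbb{Q}_{\ell}) \bigr)$.
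Applying Lemma \ref{HandN} to replace this kernel by $H_1 (\widetilde{\Gamma}_{\mathscr{W}} , \mathbb{Q}_{\ell}) (-1)$ then yields the first short exact sequence. This step is essentially formal.

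For the second sequence, I would work with the normalization $\nu \colon \coprod_{v \in \mathcal{V}(\Gamma_{\mathscr{W}} )} \widetilde{Y}_v \to Y_{\overline{k}}$ and the short exact sequence of sheaves $0 \to \mathbb{Q}_{\ell} \to \nu_* \mathbb{Q}_{\ell} \to \mathcal{S} \to 0$ on $Y_{\overline{k}}$, where $\mathcal{S}$ is a skyscraper sheaf supported at the nodes $P_e$ (one for each geometric edge of $\Gamma_{\mathscr{W}}$) with stalk $\mathbb{Q}_{\ell}$, arising as the cokernel of the diagonal $\mathbb{Q}_{\ell} \to \mathbb{Q}_{\ell}^{\oplus 2}$ at each node. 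Since $\nu$ is finite we have $H^i (Y_{\overline{k}}, \nu_* \mathbb{Q}_{\ell}) = \bigoplus_v H^i (\widetilde{Y}_v , \mathbb{Q}_{\ell})$, and $\mathcal{S}$ has cohomology only in degree $0$, which after choosing an orientation of each node is identified with $E(\Gamma_{\mathscr{W}}, \mathbb{Q}_{\ell})$. The associated long exact sequence then reads
\[
 0 \to H^0 (Y_{\overline{k}}, \mathbb{Q}_{\ell}) \to
 V(\Gamma_{\mathscr{W}}, \mathbb{Q}_{\ell}) \to
 E(\Gamma_{\mathscr{W}}, \mathbb{Q}_{\ell}) \to
 H^1 (\mathscr{W}_{\overline{k}}, \mathbb{Q}_{\ell}) \to
 \bigoplus_{v} H^1 (\widetilde{Y}_v , \mathbb{Q}_{\ell}) \to 0 .
\]
I would then identify the connecting map $V(\Gamma_{\mathscr{W}}, \mathbb{Q}_{\ell}) \to E(\Gamma_{\mathscr{W}}, \mathbb{Q}_{\ell})$ with $\delta$: the basis vector attached to a component $Y_v$ is sent to the jump of its indicator across each node, which unwinds to $\sum_{t(e)=v} e =\delta(v)$. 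Since $Y_{\overline{k}}$ is connected, $H^0 (Y_{\overline{k}}, \mathbb{Q}_{\ell}) =\mathbb{Q}_{\ell}$ maps isomorphically onto $\Ker(\delta)$, so the image of $E(\Gamma_{\mathscr{W}}, \mathbb{Q}_{\ell}) \to H^1 (\mathscr{W}_{\overline{k}}, \mathbb{Q}_{\ell})$ is $\Coker(\delta) =H^1 (\Gamma_{\mathscr{W}} , \mathbb{Q}_{\ell})$; truncating the long exact sequence gives the second short exact sequence.

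The main obstacle is the bookkeeping in the second sequence: checking that the connecting homomorphism really agrees with $\delta$ in a way compatible with the orientation conventions and the pairing $\langle\ ,\ \rangle$, so that its cokernel is canonically $H^1 (\Gamma_{\mathscr{W}} , \mathbb{Q}_{\ell})$ and its kernel is the one-dimensional space of constants, together with confirming that $Y_{\overline{k}}$ is connected and that each $\widetilde{Y}_v$ is proper so that the above cohomological identifications hold. Once these identifications are in place, both sequences follow formally from the respective long exact sequences.
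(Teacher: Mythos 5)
Your proposal is correct and follows essentially the same route as the paper: the first sequence is extracted formally from \eqref{longN} together with Lemma \ref{HandN}, and the second from the long exact sequence attached to the normalization sequence $0 \to \mathbb{Q}_{\ell} \to \nu_* \mathbb{Q}_{\ell} \to \mathcal{S} \to 0$, identifying the connecting map with $\delta$ and its cokernel with $H^1 (\Gamma_{\mathscr{W}} , \mathbb{Q}_{\ell})$. The only (cosmetic) difference is that the paper runs the second argument with the full graph $\Gamma_{\mathscr{W}'}$, i.e.\ including the affine components of $\mathscr{W}'_{\overline{k}}$, and then passes to $\Gamma_{\mathscr{W}}$ via $H^1 (\Gamma_{\mathscr{W}'} , \mathbb{Q}_{\ell}) \cong H^1 (\Gamma_{\mathscr{W}} , \mathbb{Q}_{\ell})$ and the vanishing of $H^1 (\widetilde{Y}_v , \mathbb{Q}_{\ell})$ for the extra vertices, whereas you work directly with the normalization of $Y_{\overline{k}}$.
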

\begin{proof}
The first exact sequence follows from 
\eqref{longN} and Lemma \ref{HandN}. 

By a short exact sequence 
\[
 0 \longrightarrow \mathbb{Q}_{\ell} \longrightarrow 
 \bigoplus_{v \in \mathcal{V}(\Gamma_{\mathscr{W}} )} 
 {\pi_v }_* \mathbb{Q}_{\ell}
 \longrightarrow 
 \biggl( \bigoplus_{v \in \mathcal{V}(\Gamma_{\mathscr{W}} )} 
 {\pi_v }_* \mathbb{Q}_{\ell} 
 \biggr) \bigg/ \mathbb{Q}_{\ell} 
 \longrightarrow 0 
\]
on $\mathscr{W}_{\overline{k}}$, 
we have an exact sequence 
\begin{align*}
 \bigoplus_{v \in \mathcal{V}(\Gamma_{\mathscr{W}} )} 
 H^0 (\widetilde{Y}_v , \mathbb{Q}_{\ell}) 
 &\longrightarrow H^0 \biggl( \mathscr{W}_{\overline{k}}, 
 \Bigl( \bigoplus_{v \in \mathcal{V}(\Gamma_{\mathscr{W}} )} 
 {\pi_v }_* \mathbb{Q}_{\ell} 
 \Bigr) \Big/ \mathbb{Q}_{\ell} \biggr) \\ 
 &\longrightarrow H^1 ( \mathscr{W}_{\overline{k}}, 
 \mathbb{Q}_{\ell}) \longrightarrow 
 \bigoplus_{v \in \mathcal{V}(\Gamma_{\mathscr{W}} )} 
 H^1 (\widetilde{Y}_v , \mathbb{Q}_{\ell}) 
 \longrightarrow 0. 
\end{align*}
This exact sequence and a canonical isomorphism 
\[
 \Coker \Biggl( 
 \bigoplus_{v \in \mathcal{V}(\Gamma_{\mathscr{W}} )} 
 H^0 (\widetilde{Y}_v , \mathbb{Q}_{\ell}) 
 \longrightarrow H^0 \biggl( \mathscr{W}_{\overline{k}}, 
 \Bigl( \bigoplus_{v \in \mathcal{V}(\Gamma_{\mathscr{W}} )} 
 {\pi_v }_* \mathbb{Q}_{\ell} 
 \Bigr) \Big/ \mathbb{Q}_{\ell} \biggr) \Biggr) 
 \cong  H^1 (\Gamma_{\mathscr{W}} , 
 \mathbb{Q}_{\ell}) 
\]
gives the second exact sequence. 
\end{proof}

\begin{rem}
Proposition \ref{cohomology} can be considered as 
an explicit description of a part of 
the weight spectral sequence for 
$W_{\mathbf{C}}$ (cf.\ \cite[3.8]{Illmono}). 
\end{rem}

\section{Pushforward and pullback}
Let $W_1$ and $W_2$ be wide open rigid curves with 
semi-stable coverings 
$\mathcal{S}_1 =\{ (U_{1,i} ,U_{1,i} ^\mathrm{u}) 
 \mid i \in I_1 \}$
and 
$\mathcal{S}_2 =\{ (U_{2,i} ,U_{2,i} ^\mathrm{u}) 
 \mid i \in I_2 \}$ respectively. 
All the construction in the section $3$ 
applies to $W_1$ and $W_2$, 
and the subscripts $1$ and $2$ mean 
that it is constructed from $W_1$ and $W_2$ respectively. 

\begin{defn}
We say that 
a finite flat morphism 
$f \colon W_1 \to W_2$ is compatible with 
semi-stable coverings if, 
for any $i_1 \in I_1$, 
there is $i_2 \in I_2$ such that 
$f(U_{1,i_1} ^\mathrm{u})=U_{2,i_2} ^\mathrm{u}$ 
and 
$f(U_{1,i_1} \setminus U_{1,i_1} ^\mathrm{u} )
 =U_{2,i_2} \setminus U_{2,i_2} ^\mathrm{u}$. 
\end{defn}

Let $f \colon W_1 \to W_2$ be 
a finite flat morphism of degree $n$ 
that is compatible with 
semi-stable coverings. 
The morphism $f$ induces a finite morphism 
$\hat{f} \colon \mathscr{W}_1 \to \mathscr{W}_2$ 
by Proposition \ref{patch}. 
Further, 
$\hat{f}$ induces 
$\hat{f}_{\overline{k}} \colon \mathscr{W}_{1,\overline{k}} 
 \to \mathscr{W}_{2,\overline{k}}$ and 
a finite flat morphism 
$\phi_f \colon \Gamma_{\mathscr{W}'_1} 
 \to \Gamma_{\mathscr{W}'_2}$ 
of degree $n$. 
This induces 
a finite flat morphism
$\phi_f \colon \Gamma_{\mathscr{W}_1} 
 \to \Gamma_{\mathscr{W}_2}$ 
of degree $n$. 
The morphism $\phi_f$ naturally extends to a 
finite flat morphism 
$\widetilde{\phi}_f \colon \widetilde{\Gamma}_{\mathscr{W}_1} 
 \to \widetilde{\Gamma}_{\mathscr{W}_2}$ 
of degree $n$. 

In the remaining of this section, let $j=1,2$. 
We put 
$\mathscr{V}_j =\Spf \mathcal{O}_K [[ S_j ,T_j ]]/(S_j T_j -c_j )$ 
for some $c_j \neq 0 \in \mathfrak{m}_K$. 
Let $Y_j$ and $Y_j '$ be the closed subschemes of 
the geometric closed fiber $\mathscr{V}_{j,\overline{k}}$ 
defined by $T_j =0$ and $S_j =0$ respectively. 
We note that $Y_j = Y_j ' = \mathscr{V}_{j,\overline{k}}$. 
We put 
$\mathscr{V}_{j,\mathcal{O}_{\mathbf{C}}} =
 \mathscr{V}_j \widehat{\otimes}_{\mathcal{O}_K} 
 \mathcal{O}_{\mathbf{C}}$ and 
\[
 H^0 (\mathscr{V}_{j,\overline{k}} , 
 R^1 \varPsi_{\mathscr{V}_{j,\mathcal{O}_{\mathbf{C}}}} 
 \mathbb{Q}_{\ell} )= 
 \biggl( \varprojlim_{N \in \mathbb{N} } 
 H^0 \bigl( \mathscr{V}_{j,\overline{k}} , 
 R^1 \varPsi_{\mathscr{V}_{j,\mathcal{O}_{\mathbf{C}}}} 
 (\mathbb{Z} / \ell^N \mathbb{Z} ) \bigr) \biggr) 
 \otimes_{\mathbb{Z}_{\ell}} \mathbb{Q}_{\ell}. 
\] 
By \eqref{R^1} for $\mathscr{V}_j$, 
we have a canonical isomorphism 
\begin{equation}\label{discrN}
 H^0 (\mathscr{V}_{j,\overline{k}} , 
 R^1 \varPsi_{\mathscr{V}_{j,\mathcal{O}_{\mathbf{C}}}} 
 \mathbb{Q}_{\ell} )
 \cong 
 H^0 \bigl( \mathscr{V}_{j,\overline{k}} , 
 ({i_{Y_j}}_* \mathbb{Q}_{\ell} \oplus 
 {i_{Y_j '}}_* \mathbb{Q}_{\ell} ) / 
 \mathbb{Q}_{\ell} \bigr)^* (-1) , 
\end{equation}
where $i_{Y_j} \colon Y_j \to \mathscr{V}_{j,\overline{k}}$ 
and $i_{Y_j '} \colon Y_j ' \to \mathscr{V}_{j,\overline{k}}$ 
are identity morphisms. 
We fix an identification 
$\mathbb{Q}_{\ell} \cong \mathbb{Q}_{\ell} (1)$. 
Under the identifications \eqref{discrN} and 
$\mathbb{Q}_{\ell} \cong \mathbb{Q}_{\ell} (1)$, 
we define 
$\gamma_j \in H^0 (\mathscr{V}_{j,\overline{k}} , 
 R^1\varPsi_{\mathscr{V}_{j,\mathcal{O}_{\mathbf{C}}}} 
 \mathbb{Q}_{\ell} )$ 
by 
\[ 
 \gamma_j \bigl( (a,a' ) \bigr) =a-a' \quad 
 \textrm{ for } \ 
 (a,a' ) \in H^0 \bigl( \mathscr{V}_{j,\overline{k}}, 
 ({i_{Y_j}}_* \mathbb{Q}_{\ell} 
 \oplus 
 {i_{Y_j '}}_* \mathbb{Q}_{\ell} ) / 
 \mathbb{Q}_{\ell} \bigr). 
\]

\begin{lem}\label{loc} 
Let $V_j$ be the 
open annulus associated to $\mathscr{V}_j$. 
Let $g \colon \mathscr{V}_1 \to \mathscr{V}_2$ 
be a finite morphism such that 
the induced morphism $g \colon V_1 \to V_2$ 
is a finite flat morphism of degree $m$. 
We assume that 
$g \colon \mathscr{V}_1 \to \mathscr{V}_2$ 
induces a finite morphism 
$\Spf k[[S_1 ]] \to \Spf k[[S_2 ]]$. 
Let 
\[
 g_* \colon H^0 (\mathscr{V}_{1,\overline{k}} , 
 R^1\varPsi_{\mathscr{V}_{1,\mathcal{O}_{\mathbf{C}}}} 
 \mathbb{Q}_{\ell} ) \longrightarrow 
 H^0 (\mathscr{V}_{2,\overline{k}} , 
 R^1\varPsi_{\mathscr{V}_{2,\mathcal{O}_{\mathbf{C}}}} 
 \mathbb{Q}_{\ell} )
\] 
be the pushforward by $g$, and let 
\[
 g^* \colon H^0 (\mathscr{V}_{2,\overline{k}} , 
 R^1\varPsi_{\mathscr{V}_{2,\mathcal{O}_{\mathbf{C}}}} 
 \mathbb{Q}_{\ell} ) \longrightarrow 
 H^0 (\mathscr{V}_{1,\overline{k}} , 
 R^1\varPsi_{\mathscr{V}_{1,\mathcal{O}_{\mathbf{C}}}} 
 \mathbb{Q}_{\ell} ) 
\] 
be the pullback by $g$. 
Then we have $g_* (\gamma_1 )=\gamma_2$ and $g^* (\gamma_2 )=m\gamma_1$. 
\end{lem}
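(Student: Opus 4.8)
The plan is to reduce both computations to the two branches of the node by means of the perfect pairing implicit in the identification \eqref{discrN}, and then to record the elementary effect of a finite flat map on the constant sheaves of those branches. First I note that, since each of $Y_j$ and $Y_j'$ is connected (indeed both equal $\mathscr{V}_{j,\overline{k}}$), the group $M_j:=H^0\bigl(\mathscr{V}_{j,\overline{k}},(i_{Y_j*}\mathbb{Q}_{\ell}\oplus i_{Y_j'*}\mathbb{Q}_{\ell})/\mathbb{Q}_{\ell}\bigr)$ is one-dimensional, spanned by the class $[(1,0)]$; consequently $H^0(\mathscr{V}_{j,\overline{k}},R^1\varPsi\,\mathbb{Q}_{\ell})$ is one-dimensional and $\gamma_j$ is exactly its canonical generator, namely the functional dual to the difference of the two branch components. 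Hence it suffices to evaluate $g_*\gamma_1$ and $g^*\gamma_2$ against the class $[(1,0)]$; by the adjunction between pushforward and pullback under the pairing of \eqref{discrN}, this amounts to understanding the pullback and the trace that $g$ induces on the branch cohomology $M_j$.

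Next I analyze the map of branches. The hypothesis that $g$ induces $\Spf k[[S_1]]\to\Spf k[[S_2]]$ says that $g_{\overline{k}}$ carries the branch $Y_1$ into $Y_2$ (and hence, by the same reasoning for the other coordinate, $Y_1'$ into $Y_2'$), with no interchange of the two branches; this is what fixes the sign. Since $S_2$ is an invertible coordinate on the annulus $V_2$, its pullback $g^*S_2$ is an invertible function on $V_1$ whose winding number equals the degree $m$ of $g\colon V_1\to V_2$; thus $g^*S_2$ reduces to $S_1^{m}$ times a unit along $Y_1$, and correspondingly, using $T_2=c_2 S_2^{-1}$ and $S_1T_1=c_1$, $g^*T_2$ reduces to $T_1^{m}$ times a unit along $Y_1'$. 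Hence each of the branch maps $Y_1\to Y_2$ and $Y_1'\to Y_2'$ is finite flat of degree $m$. On the $H^0$ of the constant sheaf of a branch, pullback along a finite flat map is the identity while the trace along a map of degree $m$ is multiplication by $m$; so, through the identification $M_j\cong\mathbb{Q}_{\ell}$ given by the difference functional, the pullback $M_2\to M_1$ induced by $g$ is the identity and the trace $M_1\to M_2$ is multiplication by $m$.

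Finally I assemble these. Since $g_*$ on $H^0(R^1\varPsi)$ is adjoint to the pullback on branch cohomology, $\langle g_*\gamma_1,\,[(1,0)]\rangle=\langle\gamma_1,\,[(1,0)]\rangle=1=\langle\gamma_2,\,[(1,0)]\rangle$, giving $g_*(\gamma_1)=\gamma_2$; since $g^*$ on $H^0(R^1\varPsi)$ is adjoint to the trace, $\langle g^*\gamma_2,\,[(1,0)]\rangle=\langle\gamma_2,\,m\,[(1,0)]\rangle=m$, giving $g^*(\gamma_2)=m\gamma_1$. The main obstacle is the bookkeeping in the middle step: one must verify that the two branch degrees genuinely coincide and equal $m$, and one must keep the variance straight, i.e. that $g_*$ (resp. $g^*$) on the nearby-cycle side corresponds to pullback (resp. trace) on the branch side. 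As a conceptual check, transporting everything through the identification $R^1\varPsi\,\mathbb{Q}_{\ell}\cong H^1(\mathbb{G}_{m,\overline{K}},\mathbb{Q}_{\ell})$ used in the proof of Proposition \ref{nearby} reduces the statement to the standard facts that a degree $m$ self-map of $\mathbb{G}_m$ acts by $m$ on $H^1$ under pullback and by $1$ under pushforward, the latter via the projection formula $g_*g^*=m$.
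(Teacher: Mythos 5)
Your conclusion is correct, and your key geometric input (the two branches are preserved, each with degree $m$) is the same as the paper's, but the pivotal step of your argument --- that under the identification \eqref{discrN} the operation $g_*$ on $H^0(R^1\varPsi)$ is dual to the \emph{pullback} on the branch cohomology $M_j$ while $g^*$ is dual to the \emph{trace} --- is asserted rather than proved, and it does not follow formally from anything you establish. The isomorphism \eqref{discrN} is built from a chain of identifications (Berkovich's comparison, algebraization at the node, Poincar\'e duality on $\mathbb{G}_m$, the computation of $H^1_c(\mathbb{G}_m)$), and $g$ is a morphism of formal completions given by arbitrary power series: it need not algebraize to a morphism $\mathcal{X}_1 \to \mathcal{X}_2$, so you cannot simply ``transport'' the standard $\mathbb{G}_m$ facts through \eqref{discrN}, which is what your closing ``conceptual check'' implicitly does. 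Note also that the consistency relation $g_* g^* = m$ does not pin down the variance: the wrong assignment $g_*(\gamma_1)=m\gamma_2$, $g^*(\gamma_2)=\gamma_1$ satisfies it equally well, so as written your argument cannot distinguish the correct answer from its mirror image. Deciding which of $g_*$, $g^*$ acquires the factor $m$ is essentially the entire content of the lemma, so this is a genuine gap rather than a routine omission.

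The paper avoids the issue by working on the generic fibre: it identifies $H^1(V_{j,\mathbf{C}},\mathbb{Z}/\ell^N\mathbb{Z})$ with $\mathcal{O}^{\times}_{\mathcal{X}_{j,\overline{K}}}/(\mathcal{O}^{\times}_{\mathcal{X}_{j,\overline{K}}})^{\ell^N}(-1)$ via the Kummer sequence, so that $\gamma_j$ is realized by the class of the coordinate, then computes $g^*(X_2)=c'X_1^m g'(X_1)$ on circles near the boundary and checks that the unit part $g'$ is $\ell$-divisible, hence cohomologically trivial; this yields $g^*(\gamma_2)=m\gamma_1$ directly, and $g_*(\gamma_1)=\gamma_2$ then follows from $g_* g^*=m$. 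To salvage your branch-theoretic route you would have to prove the duality compatibility for the rigid-analytic map $g\colon V_1\to V_2$ itself (Poincar\'e duality for the open annulus, the functorial identification of its $H^1_c$ with the reduced $H^0$ of the two ends, and the exchange of $g_*$ and $g^*$ under that duality), which is comparable in effort to the computation you are trying to bypass. Finally, your claim that the winding number of $g^*S_2$ equals $m$ (with the correct sign) is exactly the analytic fact the paper establishes with the expansion on circles, so it should be justified rather than taken as given.
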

\begin{proof}
We have a canonical isomorphism 
\begin{equation}\label{cohnear}
 H^1 (V_{j,\mathbf{C}} ,\mathbb{Q}_{\ell} )
 \cong 
 H^0 (\mathscr{V}_{j,\overline{k}} , 
 R^1\varPsi_{\mathscr{V}_{j,\mathcal{O}_{\mathbf{C}}}} 
 \mathbb{Q}_{\ell} ) 
\end{equation}
by the similar exact sequence as \eqref{longN} for 
$\mathscr{V}_j$. 
Hence, we may argue on the left hand side of 
\eqref{cohnear}. 
Let $N$ be a positive integer. 
Using the long exact sequence obtained from 
\[
 0 \longrightarrow (\mathbb{Z}/\ell ^N \mathbb{Z} )(1) \longrightarrow 
 \mathcal{O}^{\times} _{V_{j,\mathbf{C}}} 
 \stackrel{\ell ^N}{\longrightarrow} 
 \mathcal{O}^{\times} _{V_{j,\mathbf{C}}} \longrightarrow 0, 
\]
we have an injection 
$\mathcal{O}^{\times} _{V_{j,\mathbf{C}}} /
 (\mathcal{O}^{\times} _{V_{j,\mathbf{C}}})^{\ell ^N} 
 \longrightarrow 
 H^1 \bigl( V_{j,\mathbf{C}} ,
 (\mathbb{Z}/\ell ^N \mathbb{Z} )(1) \bigr)$. 
We consider a scheme 
$\mathcal{X}_j = \Spec \mathcal{O}_K [ S_j ,T_j ]/(S_j T_j -c_j )$. 
We consider a commutative diagram 
\begin{align*}
 &\xymatrix{
 \mathcal{O}^{\times} _{\mathcal{X}_{j,\overline{K}}} /
 (\mathcal{O}^{\times} _{\mathcal{X}_{j,\overline{K}}})^{\ell ^N} (-1) 
 \ar@{->}[r]^{\sim} \ar@{->}[d] & 
 H^1 \bigl( \mathcal{X}_{j,\overline{K}} ,
 \mathbb{Z}/\ell ^N \mathbb{Z}  \bigr) 
 \ar@{->}[r]^{\sim \hspace*{2em}} \ar@{->}[d] & 
 H^0 \bigl( \mathcal{X}_{j,\overline{k}} , 
 R^1\varPsi_{\mathcal{X}_{j,\mathcal{O}_{\overline{K}}}} 
 (\mathbb{Z}/\ell ^N \mathbb{Z} ) \bigr) 
 \ar@{->}[d]^{\! \rotatebox{90}{$\sim$}} 
 \\
 \mathcal{O}^{\times} _{V_{j,\mathbf{C}}} /
 (\mathcal{O}^{\times} _{V_{j,\mathbf{C}}})^{\ell ^N} (-1) 
 \ar@{^{(}->}[r]  & 
 H^1 \bigl( V_{j,\mathbf{C}} , 
 \mathbb{Z}/\ell ^N \mathbb{Z} \bigr) 
 \ar@{->}[r]^{\sim \hspace*{2em}} & 
 H^0 \bigl( \mathscr{V}_{j,\overline{k}} , 
 R^1\varPsi_{\mathscr{V}_{j,\mathcal{O}_{\mathbf{C}}}} 
 (\mathbb{Z}/\ell ^N \mathbb{Z} ) \bigr), 
 }
\end{align*}
where the right vertical arrow is an isomorphism 
by \cite[Theorem 3.1]{BeVanII}. 
From this commutative diagram, 
we obtain an isomorphism 
\begin{equation}\label{Ocoh}
 \mathcal{O}^{\times} _{\mathcal{X}_{j,\overline{K}}} /
 (\mathcal{O}^{\times} _{\mathcal{X}_{j,\overline{K}}})^{\ell ^N} (-1) 
 \xrightarrow{\sim} 
 H^1 \bigl( V_{j,\mathbf{C}} ,
 \mathbb{Z}/\ell ^N \mathbb{Z} \bigr). 
\end{equation}

We show that $g^*$ induces 
\begin{equation}\label{gOO}
 \mathcal{O}^{\times} _{\mathcal{X}_{2,\overline{K}}} /
 (\mathcal{O}^{\times} _{\mathcal{X}_{2,\overline{K}}})^{\ell ^N} 
 \to 
 \mathcal{O}^{\times} _{\mathcal{X}_{1,\overline{K}}} /
 (\mathcal{O}^{\times} _{\mathcal{X}_{1,\overline{K}}})^{\ell ^N} ;\ 
 S_2 \mapsto S_1 ^m . 
\end{equation} 
We consider the natural isomorphism 
$V_j \cong A_K(|c_j|,1)$ 
given by the parameter $S_j$. 
For $0< t <1$, we define a circle 
$C_t$ over $\mathbf{C}$ by 
\[
 C_t (\mathbf{C} )= 
 \{ x \in \mathbf{C} \mid |x|=t \}. 
\]
For $c \in \overline{K}$ such that 
$|c|<1$ and $|c|$ is sufficiently close to $1$, 
the morphism $g$ induces a finite flat morphism 
\[
 C_{|c|} \cong \Sp \mathbf{C} \langle X_1 ,X_1 ^{-1} \rangle 
 \longrightarrow  
 C_{m|c|} \cong \Sp \mathbf{C} \langle X_2 ,X_2 ^{-1} \rangle 
\]
of degree $m$ 
such that $g^* (X_2 )= c' X_1 ^m g' (X_1)$, 
where $X_1 =S_1 /c$, $X_2 =S_2 /c^m$, 
$c' \neq 0 \in K(c)$ and 
\[
 g' (X_1 )=1 + \sum_{k \neq 0} a_k X_1 ^k \in 
 K(c) \langle X_1 ,X_1 ^{-1} \rangle 
\]
for $a_k \in K(c)$ satisfying $|a_k | <1$. 
We take such $c \in \overline{K}$. 
Then $g' (X_1 )$ is $\ell$-divisible in 
$K(c) \langle X_1 ,X_1 ^{-1} \rangle$. 
This shows \eqref{gOO}. 
Hence, we have that $g^* (\gamma_2 )=m\gamma_1$ by 
\eqref{Ocoh}. 
Then we have $g_* (\gamma_1 )=\gamma_2$, 
because 
$g_* \circ g^* =m$ on 
$H^1 (V_{2,\mathbf{C}} ,\mathbb{Q}_{\ell} )$ by 
\cite[Theorem 5.4.1.(d)]{BeEt}. 
\end{proof}

\begin{thm}
Let $f \colon W_1 \to W_2$ be 
a finite flat morphism 
that is compatible with 
semi-stable coverings. 
Then $f$ induces the following 
commutative diagrams: 
\begin{align*}
 &\xymatrix{
 0 \ar@{->}[r]  & 
 H^1 (\mathscr{W}_{1,\overline{k}} , 
 \mathbb{Q}_{\ell} ) \ar@{->}[r] 
 \ar@{->}[d]_{\hat{f}_{\overline{k} *}} & 
 H^1 (W_{1,\mathbf{C}}, \mathbb{Q}_{\ell} ) \ar@{->}[r] 
 \ar@{->}[d]_{f_{\mathbf{C} *}} &
 H_1 (\widetilde{\Gamma}_{\mathscr{W}_1} , 
 \mathbb{Q}_{\ell}) (-1) \ar@{->}[r] 
 \ar@{->}[d]_{\widetilde{\phi}_{f *}} & 0 \\ 
 0 \ar@{->}[r]  & 
 H^1 (\mathscr{W}_{2,\overline{k}} , 
 \mathbb{Q}_{\ell} ) \ar@{->}[r] & 
 H^1 (W_{2,\mathbf{C}}, \mathbb{Q}_{\ell} ) \ar@{->}[r] &
 H_1 (\widetilde{\Gamma}_{\mathscr{W}_2} , 
 \mathbb{Q}_{\ell}) (-1) \ar@{->}[r] & 
 0, 
 }\\
 &\xymatrix{
 0 \ar@{->}[r]  & 
 H^1 (\Gamma_{\mathscr{W}_1} , 
 \mathbb{Q}_{\ell} ) \ar@{->}[r] 
 \ar@{->}[d]_{\phi_{f *}} & 
 H^1 (\mathscr{W}_{1,\overline{k}}, \mathbb{Q}_{\ell} ) \ar@{->}[r] 
 \ar@{->}[d]_{\hat{f}_{\overline{k} *}} &
 \bigoplus_{v \in \mathcal{V}
 (\Gamma_{\mathscr{W}_1})} 
 H^1 (\widetilde{Y}_v , 
 \mathbb{Q}_{\ell}) \ar@{->}[r] 
 \ar@{->}[d]_{\hat{f}_{\overline{k} *}} & 0 \\ 
 0 \ar@{->}[r]  & 
  H^1 (\Gamma_{\mathscr{W}_2} , 
 \mathbb{Q}_{\ell} ) \ar@{->}[r] & 
 H^1 (\mathscr{W}_{2,\overline{k}}, \mathbb{Q}_{\ell} ) 
 \ar@{->}[r] &
 \bigoplus_{v \in \mathcal{V}
 (\Gamma_{\mathscr{W}_2})} 
 H^1 (\widetilde{Y}_v , 
 \mathbb{Q}_{\ell}) \ar@{->}[r] & 
 0, 
 }
\end{align*}
\begin{align*}
 &\xymatrix{
 0 \ar@{->}[r]  & 
 H^1 (\mathscr{W}_{2,\overline{k}} , 
 \mathbb{Q}_{\ell} ) \ar@{->}[r] 
 \ar@{->}[d]_{\hat{f}^* _{\overline{k}}} & 
 H^1 (W_{2,\mathbf{C}}, \mathbb{Q}_{\ell} ) \ar@{->}[r] 
 \ar@{->}[d]_{f^* _{\mathbf{C}}} &
 H_1 (\widetilde{\Gamma}_{\mathscr{W}_2} , 
 \mathbb{Q}_{\ell}) (-1) \ar@{->}[r] 
 \ar@{->}[d]_{{\widetilde{\phi}^* _f}} & 0 \\ 
 0 \ar@{->}[r]  & 
 H^1 (\mathscr{W}_{1,\overline{k}} , 
 \mathbb{Q}_{\ell} ) \ar@{->}[r] & 
 H^1 (W_{1,\mathbf{C}}, \mathbb{Q}_{\ell} ) \ar@{->}[r] &
 H_1 (\widetilde{\Gamma}_{\mathscr{W}_1} , 
 \mathbb{Q}_{\ell}) (-1) \ar@{->}[r] & 
 0, 
 }\\
 &\xymatrix{
 0 \ar@{->}[r]  & 
 H^1 (\Gamma_{\mathscr{W}_2} , 
 \mathbb{Q}_{\ell} ) \ar@{->}[r] 
 \ar@{->}[d]_{\phi^* _f} & 
 H^1 (\mathscr{W}_{2,\overline{k}}, \mathbb{Q}_{\ell} ) \ar@{->}[r] 
 \ar@{->}[d]_{\hat{f}^* _{\overline{k}}} &
 \bigoplus_{v \in \mathcal{V}
 (\Gamma_{\mathscr{W}_2})} 
 H^1 (\widetilde{Y}_v , 
 \mathbb{Q}_{\ell}) \ar@{->}[r] 
 \ar@{->}[d]_{\hat{f}^* _{\overline{k}}} & 0 \\ 
 0 \ar@{->}[r]  & 
 H^1 (\Gamma_{\mathscr{W}_1} , 
 \mathbb{Q}_{\ell} ) \ar@{->}[r] & 
 H^1 (\mathscr{W}_{1,\overline{k}}, \mathbb{Q}_{\ell} ) 
 \ar@{->}[r] &
 \bigoplus_{v \in \mathcal{V}
 (\Gamma_{\mathscr{W}_1})} 
 H^1 (\widetilde{Y}_v , 
 \mathbb{Q}_{\ell}) \ar@{->}[r] & 
 0. 
 }
\end{align*}
\end{thm}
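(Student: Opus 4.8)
The plan is to treat the four diagrams uniformly. All horizontal arrows are the maps furnished by Proposition~\ref{cohomology}, hence ultimately come from the nearby cycle spectral sequence attached to $\mathscr{W}_{j,\mathcal{O}_{\mathbf{C}}}$ together with the isomorphisms \eqref{R^0}, \eqref{R^1} and Lemma~\ref{HandN}. The vertical arrows all descend from the single finite morphism $\hat{f}\colon\mathscr{W}_1\to\mathscr{W}_2$ of Proposition~\ref{patch}: on generic fibres it is $f$, on closed fibres it is $\hat{f}_{\overline{k}}$, and on dual graphs it is $\phi_f$ (resp.\ $\widetilde{\phi}_f$). Because $\hat{f}$ is finite, Berkovich's comparison \cite[Theorem 3.1]{BeVanII} makes the formation of $R\varPsi$ functorial for both pushforward and pullback and compatible with $f$ on the Berkovich generic fibres. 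Thus each diagram splits into two squares, and it suffices to check the commutativity of each square separately.

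First I would dispose of the squares that are formal. The left square of every diagram records the inclusion $H^1(\mathscr{W}_{j,\overline{k}})\hookrightarrow H^1(W_{j,\mathbf{C}})$, which is the edge map of the spectral sequence coming from \eqref{R^0}; its commutativity with $\hat{f}_{\overline{k}*}$ and $f_{\mathbf{C}*}$ (resp.\ the pullbacks) is exactly the naturality of that edge map under $\hat{f}$. In the second and fourth diagrams the middle and right vertical maps are both induced by $\hat{f}_{\overline{k}}$; the relevant square commutes by the usual functoriality of pushforward and pullback in \'etale cohomology for the finite morphism $\hat{f}_{\overline{k}}$, read off on the normalisations $\widetilde{Y}_v$ through $\pi_v$. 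The remaining vertical maps $\phi_{f*}$ and $\phi^*_f$ on $H^1(\Gamma_{\mathscr{W}_j})$ were defined in Section~2 as the duals of $\phi^*_f$ and $\phi_{f*}$ on $H_1$, so commutativity of the left square of the second and fourth diagrams follows by dualising the corresponding square in the first and third diagrams, using the short exact sequence of sheaves in the proof of Proposition~\ref{cohomology} and the perfect pairing $H_1\times H^1\to\mathbb{Q}_{\ell}$.

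The crux is the right square of the first and third diagrams, where one must match the geometric map $f_{\mathbf{C}}$ on $H_1(\widetilde{\Gamma}_{\mathscr{W}_j})(-1)$ with the combinatorial map $\widetilde{\phi}_f$. Here I would pass through the identification $h$ of Lemma~\ref{HandN}, under which an element of $H_1(\widetilde{\Gamma}_{\mathscr{W}})(-1)$ is detected by its pairings $\langle r,e\rangle$ against the local classes $c_e$ attached to the nodes $x_e$. The map $f_{\mathbf{C}*}$ (resp.\ $f^*_{\mathbf{C}}$) restricted to $H^0(\mathscr{W}_{\overline{k}},R^1\varPsi\mathbb{Q}_{\ell})$ is $\hat{f}_{\overline{k}*}$ (resp.\ $\hat{f}^*_{\overline{k}}$), and this is computed one node at a time: the formal completion of $\hat{f}$ at a node $x_e$ with $e\in\mathcal{E}(\Gamma_{\mathscr{W}_1})$ is precisely a local morphism $g\colon\mathscr{V}_1\to\mathscr{V}_2$ as in Lemma~\ref{loc}, whose degree $m$ on the annulus equals the edge multiplicity $n_e$ of $\phi_f$. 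Lemma~\ref{loc} then gives $g_*(\gamma_1)=\gamma_2$ and $g^*(\gamma_2)=n_e\gamma_1$ on the local classes.

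It remains to translate these two local identities through $h$. For the pushforward, $g_*(\gamma_1)=\gamma_2$ unwinds to $\langle\widetilde{\phi}_{f*}(r),e'\rangle=\sum_{\phi_E(e)=e'}\langle r,e\rangle$, which is exactly the effect of $\widetilde{\phi}_{f*}(R)=\phi(R)$ on cycles; for the pullback, $g^*(\gamma_2)=n_e\gamma_1$ unwinds to $\langle\widetilde{\phi}^*_f(r'),e\rangle=n_e\langle r',\phi_E(e)\rangle$, which is precisely the defining property of $\widetilde{\phi}^*_f$ via the cycles $R_1,\ldots,R_m$ of Proposition~\ref{pullcycle} (each $e$ occurring $n_e$ times). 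I expect the main obstacle to lie exactly in this last translation: one must verify that the local degree $m$ in Lemma~\ref{loc} is indeed the combinatorial multiplicity $n_e$, and then carry the orientation conventions of $\langle\,,\,\rangle$ and of $c_e=(0,1)$ consistently through the duality $h$ so that all signs match. Once the bookkeeping is fixed, the four diagrams commute.
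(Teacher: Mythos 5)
Your proposal is correct and follows essentially the same route as the paper: the left squares and the second and fourth diagrams are dismissed as formal consequences of the constructions, and the right squares of the first and third diagrams are reduced, via the identification of Lemma~\ref{HandN}, to the node-by-node local computation of Lemma~\ref{loc} (the paper phrases this with the local classes $\gamma_e$ rather than the dual pairing, but that is the same identification read in the other direction). The point you flag as the main obstacle --- that the local degree $m$ at the node $x_e$ is the edge multiplicity $n_e$ --- is exactly how $n_e$ is defined for $\phi_f$, so it is not an additional gap.
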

\begin{proof}
We can easily check the commutativities of 
the second and fourth diagrams 
from the construction of the 
short exact sequences. 
The commutativities of 
the former halves of 
the first and third diagrams 
are trivial. 

For $e \in \mathcal{E}(\Gamma_{\mathscr{W}'_j})$, 
we define 
\begin{align*}
 \gamma_e \in 
 &H^0 \Bigl( \mathscr{W}_{j,\overline{k}} , 
 \bigl( ( 
 {\pi_{t(e)}}_* \mathbb{Q}_{\ell} \oplus 
 {\pi_{s(e)}}_* \mathbb{Q}_{\ell}
 ) / \mathbb{Q}_{\ell} \bigr)^* (-1) \Bigr) \\ 
 &\subset 
 H^0 \biggl( \mathscr{W}_{j,\overline{k}} , 
 \biggl( \Bigl( \bigoplus_{v \in \mathcal{V}
 (\Gamma_{\mathscr{W}'_j})} 
 {\pi_v }_* \mathbb{Q}_{\ell} 
 \Bigr) \Big/ \mathbb{Q}_{\ell} \biggr)^* (-1)
 \biggr) 
\end{align*}
by 
$\gamma_e \bigl( (a,a' ) \bigr) =a -a'$ for 
$(a,a' ) \in H^0 \bigl( \mathscr{W}_{j,\overline{k}} , 
 ( {\pi_{t(e)}}_* \mathbb{Q}_{\ell} \oplus 
 {\pi_{s(e)}}_* \mathbb{Q}_{\ell}
 ) / \mathbb{Q}_{\ell} \bigr)$, 
and consider $\gamma_e$ as an element of 
$H^0 (\mathscr{W}_{j,\overline{k}} , 
 R^1 \varPsi_{\mathscr{W}_{j,\mathcal{O}_{\mathbf{C}}}} 
 \mathbb{Q}_{\ell} )$ 
by the canonical isomorphism 
\[
 R^1 \varPsi_{\mathscr{W}_{j,\mathcal{O}_{\mathbf{C}}}} 
 \mathbb{Q}_{\ell} \cong 
 \biggl( \Bigl( \bigoplus_{v \in \mathcal{V}
 (\Gamma_{\mathscr{W}'_j})} 
 {\pi_v }_* \mathbb{Q}_{\ell} 
 \Bigr) \Big/ \mathbb{Q}_{\ell} \biggr)^* (-1) 
\]
induced from \eqref{R^1} for $\mathscr{W}_j$. 
We define $\gamma_{e} =0$ 
for 
$e \in \mathcal{E}(\widetilde{\Gamma}_{\mathscr{W}_j})
 \setminus \mathcal{E}(\Gamma_{\mathscr{W}'_j})$. 

We show the commutativity of 
the latter half of 
the first diagram. 
We consider a cycle $R =e_1 \cdots e_m$ of 
$\widetilde{\Gamma}_{\mathscr{W}_1}$ 
as an element of 
$H_1 (\widetilde{\Gamma}_{\mathscr{W}_1} , 
 \mathbb{Q}_{\ell})$. 
Then it corresponds to 
$\sum_{i=1} ^m \gamma_{e_i} 
 \in H^0 ( \mathscr{W}_{1,\overline{k}}, 
 R^1 \varPsi_{\mathscr{W}_{1,\mathcal{O}_{\mathbf{C}}}}
 \mathbb{Q}_{\ell} )$. 
We have 
\[
 f_* \biggl( \sum_{i=1} ^m \gamma_{e_i} \biggr) = 
 \sum_{i=1} ^m \gamma_{\widetilde{\phi}_{f,E} (e_i )} 
 \in H^0 ( \mathscr{W}_{2,\overline{k}}, 
 R^1 \varPsi_{\mathscr{W}_{2,\mathcal{O}_{\mathbf{C}}}}
 \mathbb{Q}_{\ell} ) 
\] 
by Lemma \ref{loc}, 
and this corresponds to 
$\widetilde{\phi}_{f *} (R)$. 

We show the commutativity of 
the latter half of 
the third diagram. 
We consider a cycle $R' =e' _1 \cdots e' _m$ of 
$\widetilde{\Gamma}_{\mathscr{W}_2}$ 
as an element of 
$H_1 (\widetilde{\Gamma}_{\mathscr{W}_2} , 
 \mathbb{Q}_{\ell})$. 
Then it corresponds to 
$\sum_{i=1} ^m \gamma_{e'_i} 
 \in H^0 ( \mathscr{W}_{2,\overline{k}}, 
 R^1 \varPsi_{\mathscr{W}_{2,\mathcal{O}_{\mathbf{C}}}}
 \mathbb{Q}_{\ell} )$. 
We have 
\[
 f^* \biggl( \sum_{i=1} ^m \gamma_{e' _i} \biggr) = 
 \sum_{i=1} ^m 
 \sum_{e \in \widetilde{\phi}_{f,E} ^{-1} (e'_i) }
 n_e \gamma_e 
 \in H^0 ( \mathscr{W}_{1,\overline{k}}, 
 R^1 \varPsi_{\mathscr{W}_{1,\mathcal{O}_{\mathbf{C}}}}
 \mathbb{Q}_{\ell} ) 
\] 
by Lemma \ref{loc}, 
and this corresponds to 
$\widetilde{\phi}_f ^* (R' )$. 
\end{proof}

\noindent
Naoki Imai\\ 
Graduate School of Mathematical Sciences, 
The University of Tokyo, 3-8-1 Komaba, Meguro-ku, 
Tokyo, 153-8914, Japan\\ 
naoki@ms.u-tokyo.ac.jp\\ 

\noindent
Takahiro Tsushima\\ 
Graduate School of Mathematical Sciences, 
The University of Tokyo, 3-8-1 Komaba, Meguro-ku, 
Tokyo, 153-8914, Japan\\ 
tsushima@ms.u-tokyo.ac.jp\\

\end{document}